\newtheorem{theorem}{Theorem}[section]
\newtheorem{proposition}[theorem]{Proposition}
\newtheorem{corollary}[theorem]{Corollary}
\newtheorem{lemma}[theorem]{Lemma}
\newtheorem{example}{Example}
\newtheorem{observation}[theorem]{Observation}
\newtheorem{problem}{Problem}
\newcommand{\mof}{\mathrm{mof}}
\newcommand{\MOF}{\mathrm{MOF}}
\newcommand{\mim}{\mathrm{mim}}
\newcommand{\diam}{{\rm diam}}
\definecolor{DarkGreen}{rgb}{0.2, 0.6, 0.3}
\definecolor{electricindigo}{rgb}{0.44, 0.0, 1.0}
\newenvironment{enum-roman}{\begin{enumerate}[label=(\roman*), leftmargin=2\parindent]}{\end{enumerate}}
\newenvironment{enum-alph}{\begin{enumerate}[label=(\alph*), leftmargin=2\parindent]}{\end{enumerate}}
\title{Extremal $k$-forcing sets in oriented graphs}
\author{Yair Caro\thanks{Department of Mathematics and Physics, University of Haifa-Oranim, Israel}, Randy Davila\thanks{Department of Pure and Applied Mathematics, University of Johannesburg, South Africa}, and Ryan Pepper\thanks{Department of Mathematics and Statistics, University of Houston--Downtown, USA}}%\author[D. Amos]{David Amos}
\begin{document}

\color{black}

\maketitle

\begin{abstract}
This article studies the \emph{$k$-forcing number} for oriented graphs, generalizing both the \emph{zero forcing number} for directed graphs and the $k$-forcing number for simple graphs. In particular, given a simple graph $G$, we introduce the maximum (minimum) oriented $k$-forcing number, denoted $\MOF_k(G)$ ($\mof_k(G)$), which is the largest (smallest) $k$-forcing number among all possible orientations of $G$. These new ideas are compared to known graph invariants and it is shown that, among other things, $\mof(G)$ equals the path covering number of $G$ while $\MOF_k(G)$ is greater than or equal to the independence number of $G$ -- with equality holding if $G$ is a tree or if $k$ is at least the maximum degree of $G$. Along the way, we also show that many recent results about $k$-forcing number can be modified for oriented graphs.
\end{abstract}

\section{Introduction and basic definitions}
In this paper we discuss the \emph{$k$-forcing number} of an oriented graph. This concept generalizes the \emph{directed zero forcing number}, first introduced in \cite{Hogben2010} and studied in \cite{MinRankDigraphs}, while also expanding recent work on the $k$-forcing number introduced in \cite{Amos2014} and studied further in \cite{Dynamic}. The notion of zero forcing (for simple graphs) was introduced independently in \cite{AIM} and \cite{Burgarth}. In \cite{AIM}, it was introduced to bound from below the minimum rank of a graph, or equivalently, to bound from above the maximum nullity of a graph. Namely, if $G$ is a graph whose vertices are labeled from $1$ to $n$, then let $M(G)$ denote the maximum nullity over all symmetric real valued matrices where, for $i\neq j$, the ij$^{th}$ entry is nonzero if and only if $\{i,j\}$ is an edge in $G$. Then, the zero forcing number is an upper bound on $M(G)$. In \cite{Burgarth}, it is indirectly introduced in relation to a study of control of quantum systems. Additionally, the problem of zero forcing number is closely related to the Power Dominating Set Problem, which is motivated by monitoring electric power networks using Kirchoff's Law \cite{power}.  One can also imagine other applications in the spread of opinions or disease in a social network (as also described for a similar invariant by Dreyer and Roberts in \cite{Dreyer}). Many other papers have been written about zero forcing and its variants in recent years (for example \cite{Chilakammari, DaHe17+, Davila Kenter, Yi}). While most of the first papers written were from a linear algebra point of view (\cite{MinRankDigraphs, Edholm, Row2, Row}), a fruitful change to a graph theoretic approach, and connection to basic graph theoretic parameters such as degree and connectivity, as well as the more general notion of $k$-forcing, was introduced and developed in \cite{Amos2014} and \cite{Dynamic}. The main point of this paper is the introduction of the new invariants, $\MOF_k(G)$ and $\mof_k(G)$, for a simple graph $G$, that represent the extremal cases of the $k$-forcing number over all possible orientations of $G$. Before proceeding, we will give the basic terminology needed.

Let $G$ be a simple, finite, and undirected graph with vertex set $V(G)$ and edge set $E(G)$. The order and size of $G$ will be denoted $n = |V(G)|$ and $m = |E(G)|$, respectively. A graph with order $1$ is called a \emph{trivial graph}. If $E(G) = \emptyset$, we say that $G$ is the \emph{empty graph}; otherwise $G$ is a \emph{non-empty graph}.Two vertices $u$ and $w$ in $G$ are said to be \emph{adjacent}, or \emph{neighbors}, if $\{u,v\}\in E(G)$. The \emph{open neighborhood} of a vertex $v$ in $G$, is the set of neighbors of $v$, written $N(v)$, whereas the \emph{closed neighborhood} of $v$ is $N[v] = N(v)\cup\{v\}$. The degree of a vertex $v\in V(G)$, written $d(v)$, is the number of neighbors of $v$ in $G$; and so, $d(v) = |N(v)|$. The minimum degree, average degree, and maximum degree of $G$ will be denoted $\delta(G)$, $d(G)$, and $\Delta(G)$, respectively. If the graph $G$ is clear from the context, we simply write $V$, $E$, $n$, $m$, $\delta$, and $\Delta$, rather than $V(G)$, $E(G)$, $n(G)$, $m(G)$, $\delta(G)$, and $\Delta(G)$, respectively. A graph $G$ is \emph{connected} if for all vertices $v$ and $w$ in $G$, there exists a $(v,w)$-path. The length of a shortest $(v,w)$-path in $G$, is the distance between $v$ and $w$, and is written $d(v,w)$. The \emph{diameter} of $G$, written $\diam(G)$, is the maximum distance among all pairs of vertices in $G$. A set of vertices $I\subseteq V(G)$ is \emph{independent} if the vertices of $I$ are pairwise non-adjacent. The cardinality of a maximum independent set in $G$, is the \emph{independence number} of $G$, and is denoted $\alpha(G)$. For notation and terminology not defined here, the reader is referred to \cite{West}.
 
An \emph{oriented graph} (also called an \emph{asymmetric digraph}) $D$, is a digraph that can be obtained from a graph $G$ by assigning to each edge $\{u,v\} \in E$ exactly one of the ordered pairs $(u,v)$ and $(v,u)$ (orienting the edges)-- which we call \emph{arcs}. We call the resulting digraph $D$ an \emph{orientation} of $G$, and say that $D$ is an oriented graph with underlying graph $G$. Let $D$ be an oriented graph with underlying simple graph $G$. If $(u,v)$ is an arc of $D$, then we say that $u$ is \emph{directed towards} $v$, that $v$ is an \emph{out-neighbor} of $u$, and that $u$ is an \emph{in-neighbor} of $v$. For any vertex $v$ of $D$, the \emph{out-degree} (resp. \emph{in-degree}) of $v$ is denoted by $d^+(v)$ (resp. $d^-(v)$), and is the number of out-neighbors of $v$ (resp. in-neighbors of $v$). The \emph{minimum out-degree} (resp. in-degree) is denoted $\delta^+ = \delta^+(D)$ (resp. $\delta^- = \delta^-(D)$), and the maximum out-degree (resp. in-degree) is denoted $\Delta^+ = \Delta^+(D)$ (resp. $\Delta^- = \Delta^-(G)$). If every vertex has the same out-degree (resp. in-degree), then $D$ is said to be \emph{out-regular} (resp. \emph{in-regular}). A \emph{directed path} in $D$ is a sequence of vertices $u_1, u_2, \ldots, u_p$ of $D$ such that $(u_i, u_{i+1})$ is an arc of $D$, $1 \leq i \leq p-1$. We say that the directed path begins at $u_1$ and ends at $u_p$, and that $u_p$ is \emph{reachable} from $u_1$. The vertices $u_1$ and $u_p$ are called the \emph{end-vertices} of the directed path, and the directed path has \emph{length} $p$.  An oriented graph $D$ is called a \emph{reachable oriented graph}  if there exists a vertex $v$ such that every other vertex of $D$ is reachable from $v$. We call $v$ a \emph{root} of $D$. We say that $D$ is a \emph{strongly reachable oriented graph} (sometimes called a strongly connected graph) if for every pair of vertices $u,v$ of $D$, $u$ is reachable from $v$ and $v$ is reachable from $u$. Finally, throughout the entire paper, $k$ is assumed to be a positive integer. 

Now we will describe the $k$-forcing process for oriented graphs. Suppose that $D$ is an orientation of $G$, and the vertices of $D$ are colored and non-colored, with at least one vertex being colored. For each positive integer $k$, we define the \emph{$k$-color change rule} as follows: any colored vertex that is directed towards at most $k$ non-colored vertices (has at most $k$ non-colored out-neighbors) forces each of these non-colored neighbors to become colored. A colored vertex that forces a non-colored vertex to become colored is said to \emph{$k$-force} that vertex to become colored. Let $S$ be any nonempty subset of vertices of $D$. By the \emph{oriented $k$-forcing process starting from $S$}, we mean the process of first coloring the vertices of $S$, while $V(D) \setminus S$ remains non-colored, and then iteratively applying the $k$-color change rule as many times as possible, until no further color changes occur. During each step (or iteration) of the oriented $k$-forcing process, all vertices that $k$-force do so simultaneously. If after termination of the oriented $k$-forcing process, every vertex of $D$ is colored, we say that $S$ is a \emph{oriented $k$-forcing set} (or simply a \emph{$k$-forcing set}) for $D$. The cardinality of a smallest oriented $k$-forcing set for $D$ is called the \emph{oriented $k$-forcing number} of $D$ and is denoted $F_k(D)$. When $k=1$, we will drop the subscript from our notation and write $F(D)$ instead of $F_1(D)$, and this case corresponds to the directed zero forcing number (typically denoted $Z(D)$). The maximum oriented $k$-forcing number over all orientations of $G$ is denoted $\MOF_k(G)$, whereas the minimum oriented $k$-forcing number over all orientations of $G$ is denoted $\mof_k(G)$. When $k=1$, we will drop the subscripts and write $\MOF(G)$ and $\mof(G)$.

The remainder of the paper is organized as follows. In Section \ref{sec: dkf} we discuss some basic results for the oriented $k$-forcing number, many of which are extensions of the results in \cite{Amos2014} and \cite{Dynamic} to oriented graphs.
In Section \ref{sec: ekf}, and its subsections, we study $\mof_k(G)$ and $\MOF_k(G)$. In Section \ref{conclusion}, we offer some concluding remarks and state future areas of research and open problems.

%%%%%%%%%%%%%%%%%%%%%%%%%%%%%%%%%%%%%%%%%%%%%%%%%%%%%%%%%%%%%%%%%%%%%%%%%%%%%%%%%%%%%%%%%%%%%%%%%%%%%%%%%%%%%%%%%%%%%%%%%%%

\section{Oriented $k$-forcing}\label{sec: dkf}

\subsection{Basic results}\label{sec: dkf basic results}

In this section, we collect some basic results for the $k$-forcing number of an oriented graph, many of which will be useful in what follows.  Since any subset of vertices in an oriented graph $D$ that contains a $k$-forcing set is also $k$-forcing, we have the following result.

\begin{observation}\label{subset forcing}
Let $D$ be an oriented graph. If $S$ is a $k$-forcing set of $D$ and $S$ is contained in $T$, then $T$ is a $k$-forcing set of $D$.
\end{observation}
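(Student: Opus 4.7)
The plan is to exploit the monotonicity of the $k$-color change rule with respect to the current colored set. Intuitively, starting the oriented $k$-forcing process from the larger set $T$ can only help: every force that was available in the process starting from $S$ is still available in the process starting from $T$, because enlarging the colored set can only \emph{decrease} the number of non-colored out-neighbors of any vertex. So I would aim to show that the $T$-process dominates the $S$-process step by step, and then invoke the fact that the $S$-process colors every vertex to conclude the same for $T$.

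Formally, let $S = S_0 \subseteq S_1 \subseteq \cdots \subseteq S_t = V(D)$ be the chain of colored sets produced by the oriented $k$-forcing process starting from $S$, and let $T = T_0 \subseteq T_1 \subseteq \cdots$ be the analogous chain starting from $T$. I would prove by induction on $i$ that $S_i \subseteq T_i$. The base case $S_0 = S \subseteq T = T_0$ is assumed. For the inductive step, take any $v \in S_{i+1} \setminus S_i$; it was $k$-forced by some $u \in S_i$ whose out-neighbors outside $S_i$ number at most $k$. By the inductive hypothesis $S_i \subseteq T_i$, so $u \in T_i$, and the out-neighbors of $u$ outside $T_i$ form a subset of the out-neighbors of $u$ outside $S_i$. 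In particular, there are still at most $k$ of them, so $u$ is entitled to apply the $k$-color change rule during step $i+1$ of the $T$-process. Either $v$ already lies in $T_i$, or $u$ forces it now; in both cases $v \in T_{i+1}$. This yields $S_{i+1} \subseteq T_{i+1}$, completing the induction. Setting $i = t$ gives $V(D) = S_t \subseteq T_t$, so $T$ is an oriented $k$-forcing set.

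There is no substantive obstacle here: the argument is a direct monotonicity check following immediately from the definition of the $k$-color change rule, and I would expect the paper's proof to be essentially a single sentence observing that enlarging the initial set cannot destroy any legal force.
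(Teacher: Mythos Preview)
Your argument is correct and matches the paper's approach exactly; indeed, the paper states this as an observation with no formal proof beyond the preceding sentence ``any subset of vertices in an oriented graph $D$ that contains a $k$-forcing set is also $k$-forcing,'' which is precisely the monotonicity you establish. Your expectation in the final paragraph is spot on.
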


Interestingly, the $k$-forcing number is not monotonic with respect to induced subgraphs. As the next example shows, there are induced subgraphs which have a larger $k$-forcing number than the graph itself.

\begin{figure}[htb]\label{fig1}
\begin{center}
\begin{tikzpicture}[scale=.8,style=thick,x=1cm,y=1cm, =>stealth]
\def\vr{2.5pt} % \vr = vertex radius;
% define vertices
%
\path (-3,1) coordinate (x1);
\path (-2,1) coordinate (x2);
\path (-1,1) coordinate (x3);
\path (0,1) coordinate (x4);
\path (1,1) coordinate (x5);
\path (2,1) coordinate (x6);

\path (0,-.5) coordinate (v);

%\path (0.75,1.3) coordinate (w);
%
%  edges
\draw (x1) -- (x2);
\draw (x2) -- (x3);
\draw (x3) -- (x4);
\draw (x4) -- (x5);
\draw (x5) -- (x6);

\draw (x2) -> (v);
\draw (x4) -- (v);
\draw (x6) -- (v);
\draw (x1) [fill=white]circle (\vr);
\draw (x2) [fill=white]circle (\vr);
\draw (x3) [fill=white] circle (\vr);
\draw (x4) [fill=white] circle (\vr);
\draw (x5) [fill=white] circle (\vr);
\draw (x6) [fill=white] circle (\vr);

\draw (v) [fill=white] circle (\vr);

\draw (0,-1.5) node {The graph $G_6$};
\draw[anchor = south] (x1) node {{\small $x_1$}};
\draw[anchor = south] (x2) node {{\small $x_2$}};
\draw[anchor = south] (x3) node {{\small $x_3$}};
\draw[anchor = south] (x4) node {{\small $x_4$}};
\draw[anchor = south] (x5) node {{\small $x_5$}};
\draw[anchor = south] (x6) node {{\small $x_6$}};
\draw[anchor = north] (v) node {{\small $v$}};

%%%%%%%%%
\path (4.5,1) coordinate (x1);
\path (5.5,1) coordinate (x2);
\path (6.5,1) coordinate (x3);
\path (7.5,1) coordinate (x4);
\path (8.5,1) coordinate (x5);
\path (9.5,1) coordinate (x6);

\path (7.5,-.5) coordinate (v);

\draw[black, arrows={->[line width=1pt,black,length=3mm,width=3mm]}] (x1) -- (x2);
\draw[black, arrows={->[line width=1pt,black,length=3mm,width=3mm]}] (x2) -- (x3);
\draw[black, arrows={->[line width=1pt,black,length=3mm,width=3mm]}] (x3) -- (x4);
\draw[black, arrows={->[line width=1pt,black,length=3mm,width=3mm]}] (x4) -- (x5);
\draw[black, arrows={->[line width=1pt,black,length=3mm,width=3mm]}] (x5) -- (x6);

\draw[black, arrows={->[line width=1pt,black,length=4.25mm,width=3.5mm]}] (x2) -- (v);
\draw[black, arrows={->[line width=1pt,black,length=4.25mm,width=3.5mm]}] (x4) -- (v);
\draw[black, arrows={->[line width=1pt,black,length=4.25mm,width=3.5mm]}] (x6) -- (v);

\draw (x2) -- (x4);

\draw (x1) [fill=black] circle (\vr);
\draw (x2) [fill=white] circle (\vr);
\draw (x3) [fill=white] circle (\vr);
\draw (x4) [fill=white] circle (\vr);
\draw (x5) [fill=white] circle (\vr);
\draw (x6) [fill=white] circle (\vr);

\draw (v) [fill=black] circle (\vr);

\draw[anchor = south] (x1) node {{\small $x_1$}};
\draw[anchor = south] (x2) node {{\small $x_2$}};
\draw[anchor = south] (x3) node {{\small $x_3$}};
\draw[anchor = south] (x4) node {{\small $x_4$}};
\draw[anchor = south] (x5) node {{\small $x_5$}};
\draw[anchor = south] (x6) node {{\small $x_6$}};

\draw[anchor = north] (v) node {{\small $v$}};
\draw (8,-1.5) node {The orientation $D_6$};
%%%%%%%%%
\end{tikzpicture}
\end{center}
\vskip -0.5 cm
\caption{The graph $G_6$ and the orientation $D_6$ illustrating Example~\ref{not monotone subgraph}.} \label{f:fig1}
\end{figure}
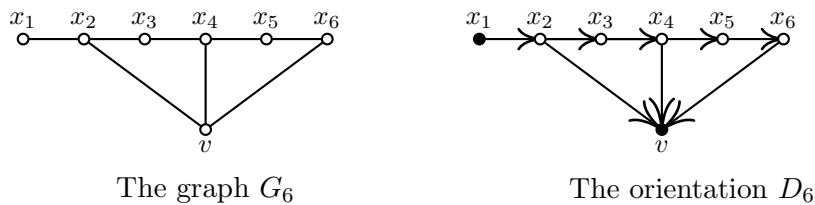

\begin{example}[Showing oriented forcing is not monotonic with induced subgraphs]\label{not monotone subgraph}
Take a path on $p \geq 6$ vertices labeled $\{x_1,x_2,\ldots,x_p\}$, where $p$ is an even number, together with an additional vertex $v$ joined to every vertex of the path with even index, $\{x_2,x_4,\ldots,x_p\}$. Call this graph $G_p$ and let $D_p$ be the orientation obtained by directing $x_i$ to $x_{i+1}$ for every $i$ satisfying $1 \leq i \leq p-1$. Now direct every vertex of the path adjacent to $v$ towards $v$. Let $H$ denote the oriented subgraph induced by the initial path and let $K$ denote the star, or $K_{1,\frac{p}{2}}$, induced by  the set of vertices with even index together with $v$, namely $\{v,x_2,x_4,\ldots,x_p\}$. Now, for $G_p$, the $1$-forcing number is $2$ and $\{x_1,v\}$ is a minimum $1$-forcing set. For $H$, the $1$-forcing number is $1$ and the set $\{x_1\}$ is a minimum $1$-forcing set. For $K$, the $1$-forcing number is $\frac{p}{2}$ and the set $\{x_2,x_4,\ldots,x_p\}$ is a minimum $1$-forcing set. Thus, $F(H) < F(G_p) < F(K)$, so there can be induced subgraphs with both larger and smaller forcing numbers, for a given orientation, than the graph itself. See Figure 1 for an illustration.
\end{example}

We generally focus on connected graphs since the $k$-forcing number is additive across components. 

\begin{observation}\label{additive}
If $G$ is a graph with connected components $G_1,\dots, G_q$, and $D$ is an orientation of $G$, with respective induced oriented subgraphs $D_1,\dots, D_q$, then $F_k(D) =  \sum_{i=1}^q F_k(D_i)$.
\end{observation}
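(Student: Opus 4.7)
The plan is to show the two inequalities $F_k(D) \leq \sum_i F_k(D_i)$ and $F_k(D) \geq \sum_i F_k(D_i)$ separately. Both directions rest on one structural observation: since the $G_i$ are the connected components of the underlying graph $G$, no edge of $G$ (and hence no arc of $D$) has endpoints in two different $D_i$'s. Consequently, for every vertex $v \in V(D_i)$, the out-neighborhood of $v$ in $D$ equals its out-neighborhood in $D_i$, so whether $v$ is eligible to $k$-force at any given stage depends only on the colored/non-colored status of the vertices in $V(D_i)$.

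For the upper bound, I would pick, for each $i$, a minimum oriented $k$-forcing set $S_i$ of $D_i$ (so $|S_i| = F_k(D_i)$), and let $S = \bigcup_{i=1}^q S_i$. Running the oriented $k$-forcing process on $D$ starting from $S$, the structural observation above implies that at every stage the color changes occurring inside $V(D_i)$ are exactly those that would occur if we ran the $k$-forcing process on $D_i$ starting from $S_i$. Since each $S_i$ is $k$-forcing for $D_i$, every $V(D_i)$ is eventually fully colored, and therefore so is $V(D)$. Hence $S$ is $k$-forcing for $D$, giving $F_k(D) \leq |S| = \sum_{i=1}^q F_k(D_i)$.

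For the lower bound, I would take any minimum oriented $k$-forcing set $S$ of $D$ and set $S_i = S \cap V(D_i)$. The same structural observation shows that the chronology of colorings inside $V(D_i)$ produced by the process on $D$ is identical to the chronology produced by running the process on $D_i$ from $S_i$. Since $V(D_i)$ is fully colored at the end, $S_i$ must be $k$-forcing for $D_i$; in particular $S_i$ is nonempty (otherwise no vertex of $D_i$ could ever be colored). Thus $|S_i| \geq F_k(D_i)$, and summing over $i$ gives $F_k(D) = |S| = \sum_{i=1}^q |S_i| \geq \sum_{i=1}^q F_k(D_i)$.

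There is no real obstacle here; the only point that could slip by without mention is verifying that each $S_i$ in the lower-bound argument is nonempty, which is needed to apply the $k$-forcing set definition (which requires a nonempty starting set). This follows because a non-colored vertex only becomes colored via a $k$-force from a colored out-in-neighbor in the same component, so an empty $S_i$ would leave all of $V(D_i)$ non-colored forever, contradicting that $S$ is $k$-forcing for $D$.
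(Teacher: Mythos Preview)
Your argument is correct; the paper states this result as an observation without proof, so there is nothing to compare against beyond noting that your two-inequality argument is exactly the routine verification one would expect.
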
 

Our next result is that the oriented $k$-forcing number is monotonic with $k$.

\begin{proposition}\label{monotonic}
If $D$ is an oriented graph, then $F_k(D) \geq F_{k+1}(D)$.
\end{proposition}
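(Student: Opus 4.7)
The plan is to show the stronger statement that every $k$-forcing set of $D$ is also a $(k+1)$-forcing set of $D$; the inequality on the minimum sizes is then immediate. The intuition is that the $(k+1)$-color change rule strictly weakens the requirement for a vertex to be allowed to force (``at most $k$'' becomes ``at most $k+1$''), so anything achievable under the stricter rule remains achievable under the weaker one.

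Concretely, I would fix a $k$-forcing set $S$ of $D$ and compare the two parallel processes started from $S$: let $C_t$ denote the colored set after $t$ rounds of the $k$-forcing process, and let $C'_t$ denote the colored set after $t$ rounds of the $(k+1)$-forcing process. I would prove by induction on $t$ that $C_t \subseteq C'_t$. The base case $t=0$ is trivial since $C_0 = C'_0 = S$. For the inductive step, suppose $C_t \subseteq C'_t$, and let $w \in C_{t+1} \setminus C_t$ be forced by some $u \in C_t$ under the $k$-rule. Then $u$ has at most $k$ non-colored out-neighbors with respect to $C_t$. Because $C_t \subseteq C'_t$, the set of non-colored out-neighbors of $u$ with respect to $C'_t$ is contained in that with respect to $C_t$, so $u$ has at most $k \leq k+1$ non-colored out-neighbors in the $(k+1)$-process, and hence $u$ is allowed to $(k+1)$-force all of its remaining non-colored out-neighbors at round $t+1$. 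Thus $w$ is either already in $C'_t$ or lies in $C'_{t+1}$; either way $w \in C'_{t+1}$, giving $C_{t+1} \subseteq C'_{t+1}$.

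Since $S$ is $k$-forcing, $C_t = V(D)$ for some $t$, and then the inclusion forces $C'_t = V(D)$ as well, so $S$ is also a $(k+1)$-forcing set. Taking $S$ to be a minimum $k$-forcing set yields $F_{k+1}(D) \leq |S| = F_k(D)$, completing the argument.

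I do not anticipate a real obstacle here: the only subtle point is making sure the inductive argument correctly handles the simultaneity of forcing within a round (which is why the argument compares entire color sets $C_t$ and $C'_t$ round by round rather than trying to match up individual force steps). Everything else is a direct consequence of monotonicity of the non-colored out-neighborhood in the colored set.
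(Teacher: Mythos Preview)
Your proposal is correct and follows essentially the same approach as the paper: both argue that a minimum $k$-forcing set $S$ is also a $(k+1)$-forcing set because any vertex with at most $k$ non-colored out-neighbors certainly has at most $k+1$, so the $(k+1)$-process colors vertices at least as fast. Your version simply makes the paper's phrase ``at least as quickly'' precise via the round-by-round inclusion $C_t \subseteq C'_t$.
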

\begin{proof}
Let $D$ be an oriented graph, $k$ be a positive integer, and $S$ be a minimum $k$-forcing set of $D$. Each vertex that $k$-forces its neighbors to change color during the $k$-forcing process starting with $S$, will also $(k+1)$-force its neighbors to change color, since having at most $k+1$ non-colored neighbors is implied by having at most $k$ non-colored neighbors. Thus, the $(k+1)$-forcing process starting with $S$ will color every vertex $D$, at least as quickly as the $k$-forcing process starting with $S$. Therefore, $S$ is a $(k+1)$-forcing set of $D$ and $F_{k+1}(D) \leq |S| = F_k(D)$, completing the proof.
\end{proof}

Since we see now that the $k$-forcing number decreases as $k$ increases, we look to the extreme cases.

\begin{proposition}\label{large-k}
If $D$ is a reachable oriented graph with maximum out-degree $\Delta^+$ with $k \geq \Delta^+$, then $F_k(D)=1$. 
\end{proposition}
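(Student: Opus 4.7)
The plan is to take a root of $D$ (which exists by hypothesis) as the starting colored set and verify it is a $k$-forcing set.

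First I would let $v$ be a root of $D$, meaning every vertex of $D$ is reachable from $v$ by a directed path. The key observation is that because $k \geq \Delta^+$, \emph{every} vertex $u$ of $D$ satisfies $d^+(u) \leq \Delta^+ \leq k$, so the number of non-colored out-neighbors of $u$ is at most $k$ at \emph{every} stage of the forcing process, regardless of the current coloring. In other words, once a vertex becomes colored, the $k$-color change rule always allows it to force all of its out-neighbors on the next step.

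Next I would color $\{v\}$ and run the oriented $k$-forcing process. A short induction on $t \geq 0$ shows that after $t$ iterations, every vertex reachable from $v$ by a directed path of length at most $t+1$ is colored: the base case $t=0$ says $v$ itself is colored, and for the inductive step, any vertex $w$ reachable by a directed path $v = u_1, u_2, \ldots, u_{t+2} = w$ has its predecessor $u_{t+1}$ colored by the induction hypothesis, and then $u_{t+1}$ forces $w$ by the observation above.

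Since every vertex of $D$ is reachable from $v$, the process eventually colors all of $V(D)$, so $\{v\}$ is an oriented $k$-forcing set and hence $F_k(D) \leq 1$. Because a $k$-forcing set is required to be nonempty, $F_k(D) \geq 1$, giving equality.

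There is really no main obstacle here: the proof is immediate once one notices that the condition $k \geq \Delta^+$ makes the $k$-color change rule vacuous (every colored vertex always forces all its out-neighbors), so the oriented $k$-forcing process starting from $v$ reduces to breadth-first exploration along directed paths from the root.
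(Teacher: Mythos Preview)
Your proposal is correct and follows essentially the same approach as the paper: pick a root $v$, observe that $k \geq \Delta^+$ means every colored vertex can always force all of its out-neighbors, and conclude that the forcing process from $\{v\}$ colors everything reachable from $v$, which is all of $D$. Your added induction on the number of iterations is a slightly more formal version of the paper's ``continuing in this fashion'' argument, but the idea is identical.
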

\begin{proof}
Let $k$ be a positive integer with $k \geq \Delta^+$. Since $D$ is reachable, there is a vertex in $D$ such that all other vertices of $D$ are reachable by directed paths from that vertex. Let $v$ be such a vertex and color $v$, leaving the rest of the vertices of $D$ non-colored. Since $v$ has at most $\Delta^+$ out-neighbors, it will $k$-force all of them to change color on the first step of the oriented $k$-forcing process. At each further step, every colored vertex has at most $\Delta^+$ out-neighbors and so will $k$-force them to change color. Since every vertex of $D$ is reachable from $v$ by a directed path, the $k$-forcing process will terminate when every vertex becomes colored. Hence $\{v\}$ is a oriented $k$-forcing set of $D$ and therefore, $F_k(D)=1$, as claimed.
\end{proof}

From these results, we arrive at the following chain of inequalities: 

\begin{equation*}
1= F_{\Delta^+}(D) \leq F_{\Delta^+ - 1}(D) \leq \dotsb \leq F_2(D) \leq F_1(D) = F(D).
\end{equation*} 

As a lower bound for the oriented $k$-forcing number, generalizing the fact that the zero forcing number is bounded below by the minimum degree, we give the following result. The proof is a only a slight modification of Proposition 2.1 from \cite{Amos2014}, and so, we omit it.

\begin{proposition}\label{Trivial Lower Bound}
If $D$ is an oriented graph with minimum out-degree $\delta^+$, then
\begin{equation*}
F_k(D) \geq  \max\{ \delta^+ - k + 1, 1 \}.
\end{equation*}
\end{proposition}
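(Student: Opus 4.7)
The plan is to mimic the proof of Proposition 2.1 from \cite{Amos2014}, essentially counting what must happen at the very first step of the forcing process. The bound $\max\{\delta^+ - k + 1, 1\}$ splits into two cases: the lower bound of $1$ is trivial since any $k$-forcing set is nonempty, so the real content is showing $F_k(D) \geq \delta^+ - k + 1$ whenever $\delta^+ \geq k$.

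First I would fix a minimum oriented $k$-forcing set $S$ of $D$ and dispose of the degenerate case $S = V(D)$: here $|S| = n \geq \delta^+ + 1 \geq \delta^+ - k + 1$ since $k \geq 1$. So I may assume $S \subsetneq V(D)$, which guarantees that the oriented $k$-forcing process starting from $S$ must actually perform at least one color change.

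Next, I would focus on the first iteration of the process. Let $u \in S$ be any vertex that $k$-forces on this first step. By definition of the $k$-color change rule, $u$ has at most $k$ non-colored out-neighbors at that moment, and since no forcing has yet occurred, the colored set is still exactly $S$. Consequently $u$ has at least $d^+(u) - k \geq \delta^+ - k$ of its out-neighbors already in $S$. Since $D$ is an oriented graph it has no loops, so $u$ is distinct from each of its out-neighbors; counting $u$ itself together with these out-neighbors yields
\begin{equation*}
|S| \;\geq\; 1 + (\delta^+ - k) \;=\; \delta^+ - k + 1.
\end{equation*}
Combined with the trivial bound $|S| \geq 1$ this gives $F_k(D) = |S| \geq \max\{\delta^+ - k + 1, 1\}$.

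There is really no obstacle here; the only point that requires a second's care is separating off the edge case $S = V(D)$ (where no forcing ever occurs and the ``first forcer'' argument is vacuous) and observing that the no-loops property of oriented graphs is what lets us add $1$ for $u$ on top of the $\delta^+ - k$ out-neighbors already counted inside $S$.
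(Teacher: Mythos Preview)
Your argument is correct and is exactly the adaptation of Proposition~2.1 of \cite{Amos2014} that the paper has in mind; indeed the paper omits the proof entirely, noting only that it is ``a slight modification'' of that result, which is precisely what you have carried out. The only point worth a remark is that your handling of the edge case $S = V(D)$ is cleaner than strictly necessary (one could simply note that if $\delta^+ - k + 1 \leq 1$ the bound is already covered by $|S|\geq 1$, and otherwise $\delta^+ \geq k \geq 1$ forces $n \geq \delta^+ + 1$), but what you wrote is fine.
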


\subsection{$k$-Forcing chains, induced $k$-ary tree covers, and reversals}

Let $D$ be an oriented graph and let $S$ be a smallest $k$-forcing set for $D$. We construct a subgraph $F$ of $D$ as follows. First, remove all arcs of $D$. Then, for each $v \in V \setminus S$, add exactly one arc $(u,v)$ of $D$ where $u$ is a vertex that $k$-forces $v$ during some application of the $k$-color change rule during the oriented $k$-forcing process starting from $S$. This subgraph $F$ is a spanning forest. Moreover, for every vertex $v \in V \setminus S$, there exists a directed path in $F$ starting at some vertex in $S$ and ending at $v$. Hence the number of components of $F$ is at most $|S|$. Since no vertex of $S$ is $k$-forced during the $k$-forcing process (they are initially colored), there is no path in $F$ starting and ending at two different vertices in $S$. This, together with the manner in which arcs are included in $F$, implies that the number of components of $F$ is at least $|S|$. This means that $F$ has $|S| = F_k(D)$ components, and each vertex in $S$ is contained in precisely one component. Each component of $F$ is called a \emph{$k$-forcing chain}, and the set $\mathscr{F}$ of components of $F$ is called a set of \emph{$k$-forcing chains} for $S$. Observe that every vertex in a $k$-forcing chain has out-degree at most $k$ in that chain, each component of $\mathscr{F}$ is an oriented tree, and $\mathscr{F}$ covers all of the vertices of $D$.

A \emph{$k$-ary} tree is a rooted tree in which each vertex has at most $k$ children. For example, a $1$-ary tree is a directed path rooted at the end-vertex with out-degree $1$, and a $2$-ary tree is a binary tree. For any oriented graph $D$, an \emph{induced} $k$-ary tree is any subgraph that can be rooted at a vertex $v$ so that the subgraph is a $k$-ary tree, and $v$ is directed towards each of its children, which are all directed towards each of their children, and so on. The smallest number of vertex-disjoint induced $k$-ary trees that cover all of the vertices of $D$ is called the \emph{induced $k$-ary tree cover number} and is denoted $IT_k(D)$.

\begin{proposition} \label{prop: dkf induced k-ary tree}
If $D$ is an oriented graph with induced $k$-ary tree cover number $IT_k(D)$, then
\begin{equation}
\nonumber F_k(D) \geq IT_k(D).
\end{equation}
\end{proposition}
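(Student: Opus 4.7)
The plan is to use the $k$-forcing chain decomposition that was just defined in the paragraph preceding the proposition and to observe that it already furnishes an induced $k$-ary tree cover of $D$ of the desired size.

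First, I would start from a minimum oriented $k$-forcing set $S$ of $D$, so that $|S|=F_k(D)$, and run the oriented $k$-forcing process starting from $S$, recording for each $v\in V\setminus S$ the unique vertex $u(v)$ that first $k$-forces $v$. The spanning forest $F$ built from the arcs $(u(v),v)$, together with its decomposition into components $\mathscr{F}=\{T_1,\dots,T_{|S|}\}$, was already shown (in the paragraph immediately above the proposition) to have exactly $|S|$ components, with each component containing exactly one vertex of $S$ and covering $V(D)$.

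Next, I would verify that each component $T_i\in\mathscr{F}$ is an induced $k$-ary tree in the sense of the definition just given. Root $T_i$ at its unique vertex $s_i\in S$. Every non-root vertex $v\in V(T_i)$ has in-degree exactly $1$ in $T_i$ (namely the arc $(u(v),v)$), so $T_i$ is an oriented rooted tree in which every arc is oriented from parent to child; in particular the root $s_i$ is directed toward its children, which in turn are directed toward their children, and so on, as required. For the branching bound, recall the $k$-color change rule: whenever a vertex $u$ performs a force, it must have at most $k$ non-colored out-neighbors, and it simultaneously forces all of them. Hence the set of $v$ for which $u=u(v)$ has size at most $k$, so $u$ has at most $k$ children in $T_i$. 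Thus each $T_i$ is a $k$-ary tree rooted at $s_i$, oriented outward from the root, i.e.\ an induced $k$-ary tree.

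Finally, since $\mathscr{F}$ is a vertex-disjoint collection of induced $k$-ary trees covering $V(D)$ with $|\mathscr{F}|=|S|=F_k(D)$, the definition of $IT_k(D)$ as a minimum over such covers yields $IT_k(D)\le F_k(D)$. The only subtlety I anticipate is a careful justification that the branching degree of each vertex in its forcing chain really is bounded by $k$ (as opposed to merely bounded by its out-degree in $D$); this is precisely where the $k$-color change rule is invoked, and once this is spelled out the conclusion is immediate.
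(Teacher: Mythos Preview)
Your proposal is correct and follows essentially the same approach as the paper: take a minimum $k$-forcing set $S$, form the set of $k$-forcing chains $\mathscr{F}$, observe that each chain is an induced $k$-ary tree, and conclude that $\mathscr{F}$ is an induced $k$-ary tree cover of size $F_k(D)$. Your write-up simply unpacks in more detail what the paper asserts in one sentence (``Observe that each chain in $\mathscr{F}$ is an induced $k$-ary tree''); the only minor imprecision is calling $u(v)$ ``the unique vertex'' that first forces $v$, since several colored vertices may simultaneously satisfy the rule---but since the construction selects one such arc per $v$, this does not affect the argument.
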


\begin{proof}
Let $S$ be a smallest $k$-forcing set for $D$ and $\mathscr{F}$ a set of $k$-forcing chains for $S$. Observe that each chain in $\mathscr{F}$ is an induced $k$-ary tree. Since $\mathscr{F}$ covers the vertices of $D$, $\mathscr{F}$ is an induced $k$-ary tree cover for $D$. Hence $F_k(D) = |\mathscr{F}| \geq IT_k(D)$.
\end{proof}

The \emph{reversal} of $D$ (sometimes called the \emph{converse} of $D$ \cite{Out-Domination}) and denoted $D'$, is the oriented graph with vertex set $V(D)$ such that $(u,v)$ is an arc of $D'$ if and only if $(v, u)$ is an arc of $D$. Next, we show that the $1$-forcing number is preserved under reversal. 

\begin{theorem}\label{Reversal Theorem}
If $D$ is an oriented graph with reversal $D'$, then
\begin{equation}
\nonumber F(D)=F(D').
\end{equation}
\end{theorem}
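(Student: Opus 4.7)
The plan is to prove $F(D') \leq F(D)$; the reverse inequality then follows by applying this to $D'$, using $(D')' = D$. I would take a minimum $1$-forcing set $S$ of $D$ (with $|S| = F(D)$) and exhibit a $1$-forcing set $T$ of $D'$ with $|T| = |S|$.

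To build $T$, I would fix a specific $1$-forcing process on $D$ starting from $S$ and form the associated forcing chains described just before Proposition~\ref{prop: dkf induced k-ary tree}. Since $k = 1$, each chain is a directed path $v_1 \to v_2 \to \cdots \to v_p$ in $D$ with $v_1 \in S$ and $v_i$ forcing $v_{i+1}$ at some step. Define $T$ to be the set of terminal vertices $v_p$ of the chains. The chains partition $V(D)$ and each contributes exactly one terminal, so $|T|$ equals the number of chains, which is $|S|$.

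To prove $T$ is a $1$-forcing set of $D'$, I would replay the forward process in reverse. If $u$ forces $v$ at step $t$ of the forward process and $t_{\max}$ denotes its last step, I would have $v$ force $u$ at step $t_{\max} - t + 1$ of the reverse process in $D'$. Proceeding inductively on the reverse step, at the moment $v$ is to force $u$ I must check: (i) $v$ is already colored; (ii) $u$ is not yet colored; (iii) every other out-neighbor of $v$ in $D'$, equivalently every in-neighbor $w \neq u$ of $v$ in $D$, is already colored. Parts (i) and (ii) follow quickly: $v$ is either terminal (hence in $T$) or forces its chain-successor at some step $t' > t$, so is colored in $D'$ by reverse step $t_{\max} - t' + 1 < t_{\max} - t + 1$; meanwhile $u$ is non-terminal and first becomes colored in $D'$ at exactly reverse step $t_{\max} - t + 1$.

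The main obstacle is (iii), which I would settle by a case analysis on when $w$ becomes colored in $D$. If $w$ is colored at step $\geq t$ in $D$, then its forward force time (if any) is $\geq t + 1$, so $w$ is already colored in $D'$ before the relevant reverse step. If $w$ is colored before step $t$ and $v$ is $w$'s unique non-colored out-neighbor at step $t - 1$, then $w$ could also have forced $v$ at step $t$; since the chain assigned $u$ as the forcer of $v$, $w$ has no non-colored out-neighbors after step $t$ and therefore lies in $T$. If $w$ is colored before step $t$ but has at least two non-colored out-neighbors at step $t - 1$, then, since non-colored out-neighbor counts only decrease, $w$'s force time is $\geq t + 1$ and $w$ is once again colored in $D'$ in time. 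With (i)--(iii) verified, the reverse process colors every non-terminal vertex, and together with $T$ this covers all of $V(D)$. Thus $T$ is a $1$-forcing set of $D'$ of size $F(D)$, and applying the same construction to $D'$ yields $F(D) = F(D')$.
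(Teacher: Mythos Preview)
Your proof is correct and follows essentially the same approach as the paper's: take the terminal vertices of the $1$-forcing chains as the candidate forcing set for $D'$, and run the forward process in reverse along the chains, so that a vertex forced at forward step $t$ forces its chain-predecessor at reverse step $t_{\max}-t+1$. Your verification of condition (iii) is in fact more explicit than what the paper writes out---the paper fully argues only the base case (vertices forced at the final forward step) and then says the argument ``may be repeated'' for earlier steps, whereas you give the case split that makes the repetition work at every step.
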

\begin{proof}
Let $S$ be a smallest $1$-forcing set for $D$ and $\mathscr{F}$ a set of $1$-forcing chains for $S$. Each $1$-forcing chain is a directed path rooted at the vertex in $S$, or an isolated vertex in $\mathscr{F}$. Let $S'$ be the set of end-vertices of each chain in $\mathscr{F}$ that are not in $S$, together with the isolated vertices of $\mathscr{F}$. Observe that $|S'| = |S|$. We claim that $S'$ is a $1$-forcing set for $D'$. Color the vertices of $S'$.

Let $u\in S'$ be a vertex which was forced on the final step of the $1$-forcing process on $D$, $P \in \mathscr{F}$ the directed path containing $u$, and $v$ the in-neighbor of $u$ in $P$ that forced $u$ during the last step of the $1$-forcing process on $D$. Suppose $u$ has an out-neighbor $w \neq v$ in $D'$. In this case, we claim that $w$ must be an end-vertex of one of the directed paths $Q \in \mathscr{F}$. If otherwise, it has an out-neighbor $z$ on $Q$ in $D$, which it could force during the $1$-forcing process on $D$ only after $u$ was forced by $v$ -- contradicting the fact that $u$ was forced on the last step. Thus, all out-neighbors of $u$ in $D'$, besides $v$, are in $S'$ and hence, are already colored. From this we conclude that $u$ forces $v$ on the first step of the $1$-forcing process on $D'$ starting from $S'$ -- having at most one non-colored out-neighbor in $D'$. Furthermore, since $u$ was arbitrarily chosen, this is also true of all vertices forced on the final step of the $1$-forcing on $D$. Therefore every vertex that is forced on the final step of the $1$-forcing process on $D$ starting with $S$, will force their out-neighbors on the directed paths of $\mathscr{F}$ to which they belong on the first step of the $1$-forcing process on $D'$ starting with $S'$. This argument may be repeated now for vertices forced in the second to last step of the $1$-forcing process on $D$ starting with $S$, forcing their corresponding out-neighbors on the directed paths of $\mathscr{F}$ to which they belong on the second step of the $1$-forcing process on $D'$ starting with $S'$. Continuing in this fashion, all of $D'$ is eventually colored by applying the $1$-forcing process starting with $S'$. Hence, $S'$ is a $1$-forcing set of $D'$ and $F(D') \leq |S'| = |S| = F(D)$. Finally, because $(D')'=D$, we have by the same argument applied to $D'$, $F(D)=F((D')') \leq F(D') \leq F(D)$. Therefore, these inequalities collapse and the theorem is proven.
\end{proof}

Theorem \ref{Reversal Theorem} fails for $k \geq 2$. To see this, consider $D=K_{1,n}$ with edges directed from the vertex of degree $n$ to the leaves. Here $F_k(D)=n-k$, but $F_k(D')=n-1$. 

\subsection{Upper bounds for the oriented $k$-forcing number}
Many of the results for $k$-forcing number can be modified to get similar results for oriented $k$-forcing, such as was the case with the results from Section \ref{sec: dkf basic results}. For more examples, the reader is referred to \cite{Amos2014}. The theorem below is a modification to oriented graphs of the main result from \cite{Dynamic}. It gives a tractable upper bound for the $k$-forcing number of a connected oriented graph. The proof is an oriented analogue of the greedy algorithm used in \cite{Dynamic}, included here to show how oriented $k$-forcing sets can be constructed. 

\begin{theorem} \label{thm: dkf upper bound 3}
If $D$ is a reachable oriented graph with order $n$, minimum out-degree $\delta^+$, and maximum out-degree $\Delta^+$ with $k \leq \Delta^+$, then
\begin{equation}
\nonumber F_k(D) \leq \frac{(\Delta^+ - k)n + k}{\Delta^+},
\end{equation}
and this inequality is sharp.
\end{theorem}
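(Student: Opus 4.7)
The plan is to build a $k$-forcing set $S$ greedily, mirroring the undirected argument in \cite{Dynamic}. Since $D$ is reachable, fix a root $r$ of $D$ and initialize $S := \{r\}$, with $r$ colored and all other vertices non-colored; at each stage, let $C$ denote the current set of colored vertices. Iterate the following. Whenever some vertex in $C$ has at most $k$ non-colored out-neighbors, apply the $k$-color change rule; call this a \emph{free step}, which adds nothing to $S$. When no such vertex exists but $C \ne V(D)$, choose some $u \in C$ with $j$ non-colored out-neighbors where $k < j \le d^+(u) \le \Delta^+$, add $j-k$ of them to $S$, and let $u$ force the remaining $k$; call this a \emph{pay step}. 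Because $r$ is a root, whenever $C \ne V(D)$ there is a directed path from $r$ to an uncolored vertex, which must contain a colored-to-uncolored arc, so a free or pay step is always available until $C = V(D)$.

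To bound $|S|$, let $P$ be the multiset of values of $j$ arising in the pay steps, so that $|S| = 1 + \sum_{j \in P}(j-k)$. The function $f(x) = (x-k)/x$ is increasing for $x>0$, and $j \le \Delta^+$ yields $j-k \le (\Delta^+ - k)\,j/\Delta^+$ for each pay step. Since every vertex other than $r$ is newly colored during exactly one pay or free step, $\sum_{j \in P} j \le n-1$, and combining these observations gives
\begin{equation*}
|S| \;\le\; 1 + \frac{\Delta^+ - k}{\Delta^+}\sum_{j \in P} j \;\le\; 1 + \frac{(\Delta^+ - k)(n-1)}{\Delta^+} \;=\; \frac{(\Delta^+ - k)n + k}{\Delta^+}.
\end{equation*}

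For sharpness, take the directed star on $n = \Delta^+ + 1$ vertices with center $v$ and all $\Delta^+$ arcs directed outward from $v$. Since $v$ has in-degree $0$ it must belong to every $k$-forcing set, and in order for $v$ to $k$-force its out-neighbors at least $\Delta^+ - k$ of them must already be colored, so $F_k(D) = \Delta^+ - k + 1$, which equals $[(\Delta^+ - k)(\Delta^+ + 1) + k]/\Delta^+$ and meets the bound. The main obstacle I anticipate is verifying that the greedy procedure always terminates with $C = V(D)$; this is precisely where the reachability hypothesis (and the choice of $r$ as a root) is needed, since without it the algorithm could stall before reaching some vertex.
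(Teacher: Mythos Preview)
Your proof is correct and follows essentially the same greedy approach as the paper, with a somewhat cleaner accounting (by tracking $\sum_{j\in P} j \le n-1$ directly, you avoid the paper's case split on whether $k \le d^+(v)$ at the root). Your sharpness example, the outward-directed star, is the depth-$1$ instance of the paper's family of complete $\Delta^+$-ary oriented trees.
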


\begin{proof}
Let $v$ be a vertex such that every other vertex in $D$ is reachable from $v$ by a directed path (such a vertex must exist by the definition of reachable oriented graphs). Note that this implies $d^+(v) \geq 1$. Let $S$ be the set containing $v$ and precisely $\max\{0, d^+(v)-k\}$ out-neighbors of $v$. Color the vertices of $S$. Observe that $v$ will $k$-force its out-neighbors on the first step of the $k$-forcing process starting from $S$, since $v$ is directed towards at most $k$ non-colored vertices. Continue the $k$-forcing process as long as possible. If all of the vertices of $D$ become colored, then $S$ is a $k$-forcing set for $D$, and thus,
\begin{equation}
\nonumber F_k(D) \leq |S| = |\{v\}| + \max\{0, d^+(v)-k\} = \max\{1, d^+(v) - k + 1 \}.
\end{equation}
Otherwise, the oriented $k$-forcing process stops before all vertices of $D$ are colored. In this case, and since $D$ is reachable, and so, every vertex was reachable from $v$. In particular, there is at least one colored vertex $u \neq v$, which is directed towards at least $k + 1$ non-colored vertices. We next  greedily color the smallest number of vertices needed in order for $u$ to $k$-force its out-neighbors, allowing the oriented $k$-forcing process to continue. Let $a(u)$ be the number of out-neighbors of $u$ we need to color, in order for the oriented $k$-forcing process to continue. Note that $a(u) \leq d^+(u) - k$. Next greedily color the $a(u)$ out-neighbors of $u$. The proportion of vertices colored by us to the total number of vertices colored, whether by us or by the $k$-color change rule, is 
\begin{equation}
\nonumber \frac{a(u)}{a(u) + k} \leq \frac{d^+(u)-k}{d^+(u)}\leq \frac{\Delta^+-k}{\Delta^+},
\end{equation}
where both inequalities come from monotonicity.

Now, let the process continue as before, and each time the process stops before the entire vertex set is colored, iterate the above steps. Note that each stoppage requires coloring more vertices according to the proportion indicated in the upper bound above. This process stops in some finite number of steps, eventually coloring all of the vertices of $D$. Hence $S$, the initial set of colored vertices, together with the vertices colored each time the $k$-forcing process stops, is a $k$-forcing set for $D$. Call this set $S'$, and note that $F_k(D) \leq |S'|$. Next, observe that after $v$ $k$-forces on the first step of the $k$-forcing process on $D$ starting from $S$, at least $d^+(v) + 1$ vertices are colored. Hence at most $n - (d^+(v) + 1)$  vertices are non-colored, which means that the number of times that the above algorithm stops is at most $n - (d^+(v) + 1)$. Therefore, $S'$ satisfies the following inequality,
\[
|S'| \leq \Big(\frac{\Delta^+-k}{\Delta^+}\Big)(n-(d^+(v)+1)) + \max\{d^+(v) -k +1,1\}.
\]
This can be written as,
\[
|S'| \leq \frac{(\Delta^+-k)n}{\Delta^+} + \frac{\Delta^+ \max\{d^+(v) -k +1,1\} - (\Delta^+ - k)(d^+(v) + 1)}{\Delta^+}.
\]
Now, if $k \leq d^+(v)$, then $\max\{d^+(v) -k +1,1\} = d^+(v) -k +1$. This, together with the fact that $d^+(v) \leq \Delta^+$, allows us to simplify the above inequality as,
\[
|S'| \leq \frac{(\Delta^+-k)n + \Delta^+ (d^+(v) -k +1) - (\Delta^+ - k)(d^+(v) + 1)}{\Delta^+} \leq \frac{(\Delta^+-k)n + k}{\Delta^+}.
\]
On the other hand, if $k \geq d^+(v)$, then $\max\{d^+(v) -k +1,1\} = 1$. This, together with the fact that $k \leq \Delta^+$, allows us to simplify the above inequality as,
\[
|S'| \leq \frac{(\Delta^+-k)n + \Delta^+ - (\Delta^+ - k)(d^+(v) + 1)}{\Delta^+} \leq \frac{(\Delta^+-k)n + k}{\Delta^+}.
\]
Therefore, in either case, since $F_k(D) \leq |S'|$, the result follows. A family of graphs demonstrating that equality can hold is presented in Example \ref{ex1} below.

\end{proof}

The theorem above can easily be modified to account for oriented graphs that are not reachable, even when the underlying simple graph is connected. Recall that a reachable oriented graph is an oriented graph which possesses a vertex which can reach all other vertices by a directed path. For a given graph $G$, a vertex $v \in G$, and an orientation $D$ of $G$, the set of all vertices reachable from $v$ is called the component of $D$ reachable by $v$. A set of vertices $R$ is called a \textit{reaching} set of $D$ if every vertex of $D$ is reachable by a directed path from some vertex in $R$. A reaching set is minimal if no vertex in the set is reachable from any other vertex in the set. With this notation, a reachable oriented graph has a reaching set of order $1$. Given a minimal reaching set $R=\{v_1,v_2,\ldots,v_r\}$ of order $r$, the component of $v_1$ is the set of all vertices reachable from $v_1$, the component of $v_2$ is the set of all vertices reachable from $v_2$ that are not in $v_1$, and in general, the component of $v_i$ is the set of all vertices reachable from $v_i$ that are not in $v_j$ for any $j<i$. Now, $k$-foricng sets can be be constructed in each of these connected components according to the greedy algorithm in the proof above. The net effect of this is to add a coefficient of $r$ in front of $k$ in the numerator of the inequality from Theorem \ref{thm: dkf upper bound 3}. We summarize in the corollary below.

\begin{corollary}\label{components}
If $D$ is an oriented graph with order $n$, minimum out-degree $\delta^+$, maximum out-degree $\Delta^+$ with $k\leq \Delta^+$, and $R$ is a minimum reaching set of $D$ with order $r$, then
\begin{equation}
\nonumber F_k(D) \leq \frac{(\Delta^+ - k)n + rk}{\Delta^+}\le \frac{(\Delta^+ - 1)n + \alpha(G)k}{\Delta^+},
\end{equation}
where this inequality is sharp, and the right most inequality comes from a theorem of Gallai and Milgram asserting $|R|\le \alpha(G)$ (see~\cite{Gallai-Milgram}).
\end{corollary}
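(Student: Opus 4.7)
The plan is to partition $V(D)$ into the reach-components $C_1,\dots,C_r$ induced by the minimum reaching set $R=\{v_1,\dots,v_r\}$ (as defined immediately before the corollary), apply Theorem \ref{thm: dkf upper bound 3} separately inside each $D[C_i]$, and sum the resulting bounds.

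The first step is a structural observation about arcs crossing these components: if $w\to u$ is an arc with $w\in C_j$ and $u\in C_i$, then $u$ is reachable from $v_j$ (via $w$), forcing $u\in C_\ell$ for some $\ell\le j$. In particular, no arc goes from $C_j$ into $C_i$ when $j<i$. Consequently a directed path in $D$ that starts at $v_i$ and leaves $C_i$ cannot re-enter, so every vertex of $C_i$ that is reachable from $v_i$ in $D$ is already reachable by a directed path lying entirely inside $C_i$. Hence $D[C_i]$ is itself a reachable oriented graph with root $v_i$.

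Next, for each $i$ I would produce a $k$-forcing set $S_i$ of $D[C_i]$ of size at most $\frac{(\Delta^+-k)n_i + k}{\Delta^+}$, where $n_i=|C_i|$. Let $\Delta_i^+ := \Delta^+(D[C_i])\le \Delta^+$. If $\Delta_i^+\ge k$, Theorem \ref{thm: dkf upper bound 3} yields $|S_i|\le\frac{(\Delta_i^+-k)n_i+k}{\Delta_i^+}$, and since the function $x\mapsto n_i-\frac{k(n_i-1)}{x}$ is non-decreasing in $x$, replacing $\Delta_i^+$ by $\Delta^+$ only weakens the bound. If instead $\Delta_i^+<k$ (which includes the trivial case $n_i=1$), Proposition \ref{large-k} gives $|S_i|=1$, and the inequality $(\Delta^+-k)(n_i-1)\ge 0$ confirms $1\le\frac{(\Delta^+-k)n_i+k}{\Delta^+}$. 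To paste the $S_i$'s together, I would process the components in the order $C_1,C_2,\dots$: once $C_1,\dots,C_{i-1}$ are fully colored, every out-neighbor in $D$ of a vertex of $C_i$ lying outside $C_i$ is already colored (by the structural observation any such out-neighbor lies in some $C_\ell$ with $\ell<i$), so the number of uncolored out-neighbors of each $u\in C_i$ in $D$ matches the number in $D[C_i]$. Thus the $k$-forcing process inside $D[C_i]$ starting from $S_i$ lifts to a $k$-forcing process inside $D$, and $S:=\bigcup_i S_i$ is a $k$-forcing set of $D$ satisfying
\[
F_k(D)\le\sum_{i=1}^r |S_i|\le\sum_{i=1}^r \frac{(\Delta^+-k)n_i+k}{\Delta^+}=\frac{(\Delta^+-k)n+rk}{\Delta^+}.
\]

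For the final inequality, Gallai-Milgram produces a partition of $V(D)$ into at most $\alpha(G)$ vertex-disjoint directed paths, and the initial vertex of each such path forms (collectively) a reaching set, so $r\le\alpha(G)$. Since $k\ge 1$ we have $(1-k)n\le 0\le k(\alpha(G)-r)$, which rearranges to $(\Delta^+-k)n+rk\le(\Delta^+-1)n+\alpha(G)k$, giving the second inequality. The main obstacle is the structural lemma together with the ordered-processing argument that certifies independently chosen forcing sets on each $D[C_i]$ combine into a forcing set for $D$; the rest is arithmetic.
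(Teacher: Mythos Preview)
Your proposal is correct and follows essentially the same approach as the paper: partition $V(D)$ into the reach-components $C_1,\dots,C_r$ determined by the minimum reaching set, apply the greedy bound of Theorem~\ref{thm: dkf upper bound 3} inside each component, and sum. Your write-up is in fact considerably more careful than the paper's sketch---you verify explicitly that $D[C_i]$ is reachable from $v_i$, that arcs leaving $C_i$ point only to lower-indexed components (so the forcing sets $S_i$ can be processed in order), you handle the degenerate case $\Delta^+_i<k$, and you supply the small arithmetic step $(k-1)n+(\alpha(G)-r)k\ge 0$ needed for the second inequality, which the paper glosses over.
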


We can get a very similar result for strongly reachable oriented graphs. In fact, if $D$ is strongly reachable, we could start the greedy algorithm at any vertex, since every vertex is reachable by directed paths from every other. Starting the process with a vertex of minimum out-degree would lead to a smaller set, so using the same technique as described above, we get the following improvement of the inequality in Theorem \ref{thm: dkf upper bound 3} with the stronger hypothesis.

\begin{theorem}\label{main2}
If $D$ is a strongly reachable oriented graph with order $n$, minimum out-degree $\delta^+$, and maximum out-degree $\Delta^+$ with $k \leq \Delta^+$, then
\begin{equation}
\nonumber F_k(D) \leq \frac{(\Delta^+ - k)n + \max\{k(\delta^+ - \Delta^+ + 1), \delta^+(k - \Delta^+) + k\}}{\Delta^+} \leq \frac{(\Delta^+ - k)n + k}{\Delta^+}.
\end{equation}
\end{theorem}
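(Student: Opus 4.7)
The plan is to mimic the greedy algorithm from the proof of Theorem \ref{thm: dkf upper bound 3}, but exploit the strong reachability hypothesis to begin the process at a vertex $v$ with $d^+(v) = \delta^+$. Since every vertex of $D$ is reachable by a directed path from every other vertex, such a choice of starting vertex is legitimate: the same step-by-step argument that built a $k$-forcing set for a reachable oriented graph still goes through, and the only change is that the initial seed and the first round of forcing are now controlled by $\delta^+$ rather than by an arbitrary $d^+(v)$.

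Concretely, I would initially color $v$ together with $\max\{0, \delta^+ - k\}$ of its out-neighbors, so the initial colored set has size $\max\{1, \delta^+ - k + 1\}$. After the first application of the $k$-color change rule, at least $\delta^+ + 1$ vertices are colored, leaving at most $n - (\delta^+ + 1)$ still non-colored. Each subsequent stall of the $k$-forcing process requires us to color additional vertices in a ratio of at most $(\Delta^+ - k)/\Delta^+$ of the vertices still needing to be colored, exactly as in Theorem~\ref{thm: dkf upper bound 3}. Summing gives
\[
F_k(D) \leq \frac{\Delta^+ - k}{\Delta^+}\bigl(n - (\delta^+ + 1)\bigr) + \max\{\delta^+ - k + 1,\, 1\}.
\]

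The next step is to clear the fraction and split into the two cases $k \leq \delta^+$ and $k > \delta^+$, according to which branch of the maximum is active. A short algebraic simplification in the first case collapses the constant term to $k(\delta^+ - \Delta^+ + 1)$, and in the second case it collapses to $\delta^+(k - \Delta^+) + k$. Combining these two cases produces the first (stronger) inequality in the theorem. Finally, the outer inequality follows by observing that $k(\delta^+ - \Delta^+ + 1) \leq k$ because $\delta^+ \leq \Delta^+$, and $\delta^+(k - \Delta^+) + k \leq k$ because $k \leq \Delta^+$; hence the maximum is at most $k$.

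I do not expect a genuine obstacle here, since the engine (the greedy algorithm and its accounting in terms of the ratio $(\Delta^+ - k)/\Delta^+$) is already established in Theorem~\ref{thm: dkf upper bound 3}. The only delicate point is bookkeeping: one has to track the two cases $k \leq \delta^+$ and $k > \delta^+$ carefully when simplifying, and make sure the algebra lines up with the two expressions inside the maximum. Provided this is done cleanly, the upper bound drops out and the dominating estimate by $k$ follows from the two inequalities noted above.
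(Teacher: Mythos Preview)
Your proposal is correct and matches the paper's approach exactly: the paper does not give a detailed proof of Theorem~\ref{main2} but simply remarks that, since strong reachability lets you start the greedy algorithm of Theorem~\ref{thm: dkf upper bound 3} at any vertex, one should start at a vertex of minimum out-degree and rerun the same accounting. Your write-up fills in precisely those details, and the case split $k \le \delta^+$ versus $k > \delta^+$ together with the algebraic simplifications and the final comparison with $k$ are all correct.
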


Specifying that $k=1$ in Theorem \ref{thm: dkf upper bound 3} above, we get the nice simple corollary for the oriented $1$-forcing number (directed zero-forcing number) for reachable oriented graphs.

\begin{corollary}\label{simple_upper}
If $D$ is a reachable oriented graph with order $n$ and maximum out-degree $\Delta^+$, then,
\begin{equation}
\nonumber F(D) \leq \frac{(\Delta^+ - 1)n + 1}{\Delta^+},
\end{equation}
and this inequality is sharp.
\end{corollary}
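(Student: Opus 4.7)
The plan is to obtain this corollary as the immediate specialization of Theorem~\ref{thm: dkf upper bound 3} to the case $k=1$. Because $D$ is reachable, whenever $n \geq 2$ there is a root vertex with out-degree at least $1$, so the hypothesis $k \leq \Delta^+$ of the theorem reduces to $1 \leq \Delta^+$, which holds automatically; the degenerate case $n=1$ makes the inequality trivial. Substituting $k=1$ into the bound
\[
F_k(D) \leq \frac{(\Delta^+ - k)n + k}{\Delta^+}
\]
then yields the desired inequality $F(D) \leq ((\Delta^+ - 1)n + 1)/\Delta^+$ with no further work required.

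For sharpness, the plan is to display one explicit family achieving equality, rather than appealing to the (more general) Example~\ref{ex1} already referenced inside the proof of Theorem~\ref{thm: dkf upper bound 3}. I would take $D$ to be the out-directed star on $\Delta^+ + 1$ vertices, i.e., the orientation of $K_{1,\Delta^+}$ in which every arc is directed from the center $v$ to a leaf. Here $n = \Delta^+ + 1$, and $F(D) = \Delta^+$: the vertex $v$ must lie in every $1$-forcing set (leaves have out-degree $0$ and can force nothing), and since $v$ may $1$-force a non-colored leaf only when it has at most one non-colored out-neighbor, all but one of the $\Delta^+$ leaves must already be colored initially. Plugging $n = \Delta^+ + 1$ into the bound gives
\[
\frac{(\Delta^+ - 1)(\Delta^+ + 1) + 1}{\Delta^+} \;=\; \frac{(\Delta^+)^2}{\Delta^+} \;=\; \Delta^+,
\]
matching $F(D)$ exactly.

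There is essentially no mathematical obstacle, since the substantive content is already carried by Theorem~\ref{thm: dkf upper bound 3}. The only part that demands any care is verifying the sharpness witness, and the star calculation above is routine. I would write the proof as a one-line specialization followed by a short paragraph presenting the out-directed star as the equality case.
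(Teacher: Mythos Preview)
Your proposal is correct and follows the same route as the paper: the corollary is obtained simply by setting $k=1$ in Theorem~\ref{thm: dkf upper bound 3}, and sharpness is witnessed by a member of the family in Example~\ref{ex1}. Your out-directed star is precisely the $r=1$ instance of that example, so you have just chosen the simplest case rather than a genuinely different construction. One small remark: for $n=1$ the bound is undefined (division by zero since $\Delta^+=0$), so rather than calling this case ``trivial'' it is cleaner to note that the hypothesis $1\le\Delta^+$ inherited from Theorem~\ref{thm: dkf upper bound 3} already excludes it.
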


Both Theorem \ref{thm: dkf upper bound 3} and Corollary \ref{simple_upper} are tight upper bounds in that the inequalities presented can be satisfied with equality. To see this, consider the following example, where the greedy algorithm described in the proofs above is exactly what is necessary to $k$-force the oriented graph.

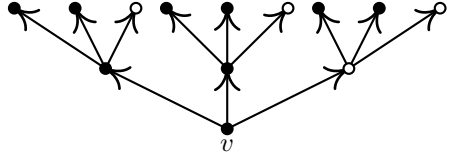
\begin{figure}[htb]
\begin{center}

\begin{tikzpicture}[scale=.8,style=thick,x=1cm,y=1cm]
\def\vr{2.5pt} % \vr = vertex radius;
% define vertices

\path (10.5,2) coordinate (x1);

\path (11.5,2) coordinate (u1);
\path (12.5,2) coordinate (u2);
\path (13,2) coordinate (u3);

\path (14,2) coordinate (x2);

\path (15,2) coordinate (u4);
\path (15.5,2) coordinate (u5);
\path (16.5,2) coordinate (u6);

\path (17.5,2) coordinate (x3);

\path (14,0) coordinate (v);
\path (12,1) coordinate (v1);
\path (14,1) coordinate (v2);
\path (16,1) coordinate (v3);
\draw[black, arrows={->[line width=1pt,black,length=3mm,width=3.5mm]}]  (v) -- (v1);
\draw[black, arrows={->[line width=1pt,black,length=3mm,width=3.5mm]}]  (v) -- (v2);
\draw[black, arrows={->[line width=1pt,black,length=3mm,width=3.5mm]}]  (v) -- (v3);

\draw[black, arrows={->[line width=1pt,black,length=3mm,width=3.5mm]}]  (v1) -- (u1);
\draw[black, arrows={->[line width=1pt,black,length=3mm,width=3.5mm]}]  (v1) -- (u2);
\draw[black, arrows={->[line width=1pt,black,length=3mm,width=3.5mm]}]  (v1) -- (x1);

\draw[black, arrows={->[line width=1pt,black,length=3mm,width=3.5mm]}]  (v2) -- (u3);
\draw[black, arrows={->[line width=1pt,black,length=3mm,width=3.5mm]}]  (v2) -- (u4);
\draw[black, arrows={->[line width=1pt,black,length=3mm,width=3.5mm]}]  (v2) -- (x2);

\draw[black, arrows={->[line width=1pt,black,length=3mm,width=3.5mm]}]  (v3) -- (u5);
\draw[black, arrows={->[line width=1pt,black,length=3mm,width=3.5mm]}]  (v3) -- (u6);
\draw[black, arrows={->[line width=1pt,black,length=3mm,width=3.5mm]}]  (v3) -- (x3);

\draw (v) [fill=black] circle (\vr);
\draw (v1) [fill=black] circle (\vr);
\draw (v2) [fill=black] circle (\vr);
\draw (v3) [fill=white] circle (\vr);
\draw (u1) [fill=black] circle (\vr);
\draw (u2) [fill=white] circle (\vr);
\draw (u3) [fill=black] circle (\vr);
\draw (u4) [fill=white] circle (\vr);
\draw (u5) [fill=black] circle (\vr);
\draw (u6) [fill=black] circle (\vr);
%\draw (14,-0.75) node {(c)};

\draw (x1) [fill=black] circle (\vr);
\draw (x2) [fill=black] circle (\vr);
\draw (x3) [fill=white] circle (\vr);

\draw[anchor = north] (v) node {{\small $v$}};

%\draw[anchor = south] (v) node {{\small $v$}};
%%%%%%%%%
\end{tikzpicture}
\end{center}
\vskip -0.7 cm
\caption{An illustration of Example~\ref{ex1}}
\end{figure}

\begin{example}\label{ex1}
Consider the oriented tree $T$, rooted at a vertex $v$ of out-degree $\Delta^+$, where each out-neighbor of $v$ has $\Delta^+$ additional out-neighbors, and each of those has $\Delta^+$ additional out-neighbors, and so on. Let $r$ denote the number of layers of this oriented tree, where the vertex $v$ is layer $0$, the $\Delta^+$ out-neighbors of $v$ are layer $1$, and so on. Furthermore, there are no other edges in the tree than those described, so that the last layer of leaves have out-degree equal to zero. For this tree, with $r \geq 1$, we use the geometric series formula to get,
\[
n=\sum_{i=1}^r (\Delta^+)^i = \frac{(\Delta^+)^{r+1}-1}{\Delta^+-1}.
\]
Since to $k$-force this tree, we need $v$, plus $\Delta^+-k$ out-neighbors of $v$, plus, for each of $\Delta^+$ out-neighbors, $\Delta^+-k$ out-neighbors, and so on for each layer. Thus, the $k$-forcing number of this oriented tree is given by,
\[
F_k(T)=1+(\Delta^+-k)+(\Delta^+-k)(\Delta^+) + (\Delta^+-k)(\Delta^+)^2+ \ldots + (\Delta^+-k)(\Delta^+)^{r-1}.
\]
This can be simplified again using the geometric series formula to get,
\[
F_k(T)=1+(\Delta^+-k)\sum_{i=0}^{r-1} (\Delta^+)^i = 1 + (\Delta^+-k)\frac{(\Delta^+)^r-1}{\Delta^+-1}.
\]
Finally, substituting these equations into Theorem \ref{thm: dkf upper bound 3}, or into Corollary \ref{simple_upper} if $k=1$, we see that equality holds.
\end{example}

We can get an improvement on Corollary \ref{simple_upper} when the reachable condition is replaced by the strongly reachable condition. Namely, since the reverse orientation $D'$ of a strongly reachable orientation $D$ is also strongly reachable, we can replace the role of $\Delta^+$ in Corollary \ref{simple_upper} by that of $\Delta^-$, and then appeal to 
Theorem \ref{Reversal Theorem} which states that $F(D)=F(D')$, to get the following result.

\begin{corollary}
If $D$ is a strongly reachable oriented graph with order $n$, maximum out-degree $\Delta^+$ and maximum in-degree $\Delta^-$, then
\begin{equation}
\nonumber F(D) \leq \min\Big{\{\frac{(\Delta^+ - 1)n + 1}{\Delta^+},\frac{(\Delta^- - 1)n + 1}{\Delta^-}}\Big\}.
\end{equation}
\end{corollary}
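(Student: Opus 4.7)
The plan is to exploit the two tools already proved in the paper: Corollary \ref{simple_upper}, which bounds $F(D)$ in terms of $n$ and the maximum out-degree of a reachable oriented graph, and Theorem \ref{Reversal Theorem}, which says that the $1$-forcing number is invariant under reversal. Since the minimum in the statement is symmetric between $\Delta^+$ and $\Delta^-$, I expect the proof to amount to applying Corollary \ref{simple_upper} once to $D$ and once to the reversal $D'$, and then taking the smaller of the two bounds.

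First I would apply Corollary \ref{simple_upper} directly to $D$. Since a strongly reachable oriented graph is in particular reachable (any vertex can serve as a root), the corollary yields
\[
F(D) \leq \frac{(\Delta^+ - 1)n + 1}{\Delta^+}.
\]
Next I would consider the reversal $D'$. The key observation is that strong reachability is preserved under reversal: if every pair $(u,v)$ admits directed paths in both directions in $D$, then reversing every arc simply swaps the two paths, so the same holds in $D'$. Hence $D'$ is strongly reachable, therefore reachable, and Corollary \ref{simple_upper} applies to $D'$ as well. Since the out-neighbors of a vertex in $D'$ are exactly its in-neighbors in $D$, we have $\Delta^+(D') = \Delta^-(D) = \Delta^-$, giving
\[
F(D') \leq \frac{(\Delta^- - 1)n + 1}{\Delta^-}.
\]

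Finally, Theorem \ref{Reversal Theorem} tells us $F(D) = F(D')$, so the bound obtained from $D'$ is actually a bound on $F(D)$. Taking the smaller of the two inequalities then yields the claimed minimum. There is no real obstacle here; the only thing to be careful about is confirming that strong reachability passes to the reversal and that $\Delta^+(D') = \Delta^-(D)$, both of which follow immediately from the definitions. The argument is essentially a three-line combination of Corollary \ref{simple_upper} with Theorem \ref{Reversal Theorem}.
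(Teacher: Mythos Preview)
Your proposal is correct and matches the paper's own argument essentially line for line: the paper also observes that the reversal of a strongly reachable orientation is strongly reachable, applies Corollary~\ref{simple_upper} to both $D$ and $D'$ (using $\Delta^+(D')=\Delta^-$), and then invokes Theorem~\ref{Reversal Theorem} to transfer the bound on $F(D')$ back to $F(D)$.
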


%%%%%%%%%%%%%%%%%%%%%%%%%%%%%%%%%%%%%%%%%%%%%%%%%%%%%%%%%%%%%%%%%%%%%%%%%%%%%%%%%%%%%%%%%%%%%%%%%%%%%%%%%%%%%%%%%%%%%%%%%%%

\section{Extremal $k$-forcing sets} \label{sec: ekf}

We now shift our focus to the extremal cases of the oriented $k$-forcing number over all orientations of a graph $G$, which is the second main section of our paper. Recall from the introduction that the \emph{maximum oriented $k$-forcing number}, denoted $\MOF_k(G)$, is defined as
\begin{equation}
\nonumber \MOF_k(G) = \max \{F_k(D) : D \textrm{ is an orientation of } G\},
\end{equation}
and that the \emph{minimum oriented $k$-forcing number} of $G$, denoted $\mof_k(G)$, is defined as
\begin{equation}
\nonumber \mof_k(G) = \min \{F_k(D) : D \textrm{ is an orientation of } G \}.
\end{equation}
Clearly then, $\mof_k(G) \leq F_k(D) \leq \MOF_k(G)$, for any orientation $D$ of the graph $G$. Recall also that when $k=1$, we drop the subscript from our notation and write $\mof(G)$ and $\MOF(G)$ for the minimum and maximum oriented $1$-forcing numbers, respectively. Before proceeding, we offer a simple example to illustrate the definitions and to highlight the differences.

\begin{example}\label{path}
Consider the path on $n$ vertices denoted $P_n=\{v_1,v_2,\ldots,v_n\}$. At one extreme, we can orient the edges so that $(v_i,v_{i+1})$ is an arc for every $i$ such that $1 \leq i \leq n-1$. In this case, $\{v_1\}$ is a $1$-forcing set since after it is colored, each vertex will force its unique out-neighbor along the directed path. Hence, $\mof(P_n)=1$. At the other extreme, if we orient the edges by including the arc $(v_1,v_2)$, and then repeatedly change the direction of each additional arc along the path from from $v_1$ to $v_n$, we create a situation where every other vertex has in-degree $0$ and therefore must be included in every $1$-forcing set. This turns out to be the orientation with the greatest possible forcing number for $P_n$ and $\MOF(P_n)=\lceil \frac{n}{2} \rceil$. This shows that the difference between $\MOF(G)$ and $\mof(G)$ can be arbitrarily large. See Figure~\ref{fig3} for an illustration.
\end{example}

\begin{figure}[htb]\label{fig3}
\begin{center}
\begin{tikzpicture}[scale=.8,style=thick,x=1cm,y=1cm, =>stealth]
\def\vr{2.5pt} % \vr = vertex radius;
% define vertices
%
\path (-3,0) coordinate (x1);
\path (-2,0) coordinate (x2);
\path (-1,0) coordinate (x3);
\path (0,0) coordinate (x4);
\path (1,0) coordinate (x5);
\path (2,0) coordinate (x6);

%\path (0.75,1.3) coordinate (w);
%
%  edges
\draw[black, arrows={->[line width=1pt,black,length=3mm,width=3mm]}] (x1) -- (x2);
\draw[black, arrows={->[line width=1pt,black,length=3mm,width=3mm]}] (x2) -- (x3);
\draw[black, arrows={->[line width=1pt,black,length=3mm,width=3mm]}] (x3) -- (x4);
\draw[black, arrows={->[line width=1pt,black,length=3mm,width=3mm]}] (x4) -- (x5);
\draw[black, arrows={->[line width=1pt,black,length=3mm,width=3mm]}] (x5) -- (x6);

\draw (x1) [fill=black]circle (\vr);
\draw (x2) [fill=white]circle (\vr);
\draw (x3) [fill=white] circle (\vr);
\draw (x4) [fill=white] circle (\vr);
\draw (x5) [fill=white] circle (\vr);
\draw (x6) [fill=white] circle (\vr);

\draw (0,-1.5) node {$\mof(P_6) = 1$};
\draw[anchor = south] (x1) node {{\small $x_1$}};
\draw[anchor = south] (x2) node {{\small $x_2$}};
\draw[anchor = south] (x3) node {{\small $x_3$}};
\draw[anchor = south] (x4) node {{\small $x_4$}};
\draw[anchor = south] (x5) node {{\small $x_5$}};
\draw[anchor = south] (x6) node {{\small $x_6$}};

%%%%%%%%%
\path (4.5,0) coordinate (x1);
\path (5.5,0) coordinate (x2);
\path (6.5,0) coordinate (x3);
\path (7.5,0) coordinate (x4);
\path (8.5,0) coordinate (x5);
\path (9.5,0) coordinate (x6);

\draw[black, arrows={->[line width=1pt,black,length=3mm,width=3mm]}] (x1) -- (x2);
\draw[black, arrows={->[line width=1pt,black,length=3mm,width=3mm]}] (x3) -- (x2);
\draw[black, arrows={->[line width=1pt,black,length=3mm,width=3mm]}] (x3) -- (x4);
\draw[black, arrows={->[line width=1pt,black,length=3mm,width=3mm]}] (x5) -- (x4);
\draw[black, arrows={->[line width=1pt,black,length=3mm,width=3mm]}] (x5) -- (x6);

\draw (x1) [fill=black] circle (\vr);
\draw (x2) [fill=white] circle (\vr);
\draw (x3) [fill=black] circle (\vr);
\draw (x4) [fill=white] circle (\vr);
\draw (x5) [fill=black] circle (\vr);
\draw (x6) [fill=white] circle (\vr);

\draw[anchor = south] (x1) node {{\small $x_1$}};
\draw[anchor = south] (x2) node {{\small $x_2$}};
\draw[anchor = south] (x3) node {{\small $x_3$}};
\draw[anchor = south] (x4) node {{\small $x_4$}};
\draw[anchor = south] (x5) node {{\small $x_5$}};
\draw[anchor = south] (x6) node {{\small $x_6$}};

\draw (8,-1.5) node {$\MOF(P_6) = 3$};
%%%%%%%%%
\end{tikzpicture}
\end{center}
\vskip -0.5 cm
\caption{An illustration of Example~\ref{path}.} \label{f:fig1}
\end{figure}
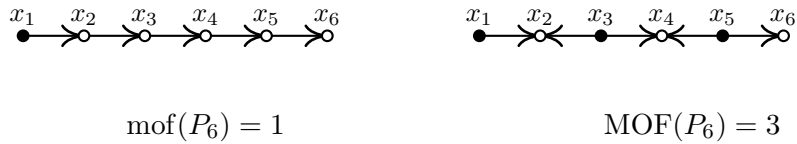

This section is organized as follows. In this Subsection \ref{sec: ekf basic facts}, we will establish some basic results about $\MOF_k(G)$ and $\mof_k(G)$. In Subsection  \ref {sec: mof}, we focus exclusively on $\mof_k(G)$, applying previous results and giving some new ones. In Subsection \ref{sec: MOF}, we focus exclusively on $\MOF_k(G)$, applying previous results and giving some new ones, including the value of this invariant for trees. 

\subsection{Basic results} \label{sec: ekf basic facts}

Since directed $k$-forcing was additive across components, the same is true for $\mof_k(G)$ and $\MOF_k(G)$. If $G$ is a disconnected graph with components $G_1, G_2, \ldots, G_r$, then both of the following equations are true;
\begin{equation}
\nonumber \mof_k(G) = \sum_{i=1}^r \mof_k(G_i),
\end{equation}

\begin{equation}
\nonumber \MOF_k(G) = \sum_{i=1}^r \MOF_k(G_i).
\end{equation}
Thus, it is enough to study the minimum and maximum oriented $k$-forcing numbers in connected graphs. As was shown in Proposition \ref{monotonic}, the oriented $k$-forcing number is monotonically non-increasing with $k$, that is $F_{k+1}(D) \leq F_k(D)$ for every positive integer $k$ and for every orientation $D$ of a graph $G$. This leads to the following observations.

\begin{observation}\label{monotonic_MOF}
If $G$ is a graph and $k$ is a positive integer, then the following are true,
\[
\mof_{k+1}(G) \leq \mof_k(G),
\]
\[
\MOF_{k+1}(G) \leq \MOF_k(G),
\]

\end{observation}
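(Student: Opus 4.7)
The plan is to reduce both inequalities directly to Proposition \ref{monotonic}, which says that for any single orientation $D$ of $G$ we have $F_{k+1}(D) \leq F_k(D)$. The two inequalities then amount to checking that this pointwise monotonicity in $k$ survives when we pass to the minimum (respectively, maximum) of $F_k(D)$ over all orientations of $G$. The standard trick is to pick, in each case, the extremal orientation on the side of the inequality that is \emph{harder} to bound, so that Proposition \ref{monotonic} points the right way.

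For $\mof_{k+1}(G) \leq \mof_k(G)$, I would choose an orientation $D^*$ of $G$ that realizes the minimum on the right, namely $F_k(D^*) = \mof_k(G)$. Applying Proposition \ref{monotonic} to $D^*$ gives $F_{k+1}(D^*) \leq F_k(D^*)$. Since $\mof_{k+1}(G)$ is defined as the minimum of $F_{k+1}(D)$ over all orientations of $G$, and $D^*$ is one such orientation, the chain
\[
\mof_{k+1}(G) \leq F_{k+1}(D^*) \leq F_k(D^*) = \mof_k(G)
\]
delivers the first inequality.

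For $\MOF_{k+1}(G) \leq \MOF_k(G)$, I would instead choose $D^*$ to realize the maximum on the left, that is $F_{k+1}(D^*) = \MOF_{k+1}(G)$. Proposition \ref{monotonic} again gives $F_{k+1}(D^*) \leq F_k(D^*)$, and now $F_k(D^*) \leq \MOF_k(G)$ by the definition of $\MOF_k$ as the maximum of $F_k(D)$ over all orientations. Stringing these together,
\[
\MOF_{k+1}(G) = F_{k+1}(D^*) \leq F_k(D^*) \leq \MOF_k(G),
\]
which is the second inequality.

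There is essentially no obstacle here; the proof is pure bookkeeping on min/max over orientations. The one point worth flagging for the reader is that the extremal orientation $D^*$ must be chosen \emph{differently} in the two cases: in the \textsc{mof} case we pick the minimizer on the right-hand side so that the middle inequality $F_{k+1}(D^*) \leq F_k(D^*)$ is sandwiched between $\mof_{k+1}(G)$ (a lower bound on $F_{k+1}$) and an equality, while in the \textsc{MOF} case we pick the maximizer on the left-hand side so that the same middle inequality is sandwiched between an equality and $\MOF_k(G)$ (an upper bound on $F_k$). This asymmetry is the only non-mechanical aspect of the argument.
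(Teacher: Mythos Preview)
Your proof is correct and matches the paper's approach exactly: the paper states this as an observation following directly from Proposition~\ref{monotonic}, and your argument spells out precisely the standard min/max bookkeeping that the paper leaves implicit.
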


In Subsection \ref{sec: mof} we show that $\mof(G)=\rho(G)$, where $\rho(G)$ is the minimum number of vertex disjoint paths that cover all of the vertices of $G$ (called the path-covering number of $G$, see \cite{Gallai-Milgram, Hogben2010}). In Subsection \ref{sec: MOF} we show that $\MOF_\Delta(G) =\alpha(G)$. It is well known that $\rho(G) \leq \alpha(G)$ (since one can form an independent set by taking the end-vertices of the paths in a minimum path cover). These facts, together with Observation \ref{monotonic_MOF}, lead to the following result.

\begin{observation}
If $G$ is a non-empty graph with order $n$ and maximum degree $\Delta$ with $k \leq \Delta$, then the following chain of inequalities holds,
\[
\mof_{\Delta}(G) \leq \mof_{k}(G) \leq \mof(G) = \rho(G) \leq \alpha(G) = \MOF_{\Delta}(G) \leq \MOF_{k}(G) \leq \MOF(G) \leq n-1.
\]
\end{observation}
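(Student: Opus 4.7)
The plan is to verify the chain link by link, leveraging results already stated or promised in the paper. The two outer monotonicity runs $\mof_\Delta(G) \le \mof_k(G) \le \mof(G)$ and $\MOF_\Delta(G) \le \MOF_k(G) \le \MOF(G)$ fall out immediately from Observation \ref{monotonic_MOF} applied successively, given the hypothesis $1 \le k \le \Delta$, and the two middle equalities $\mof(G)=\rho(G)$ and $\MOF_\Delta(G)=\alpha(G)$ I would simply cite as forthcoming results from Subsections \ref{sec: mof} and \ref{sec: MOF}. That leaves only two small pieces of genuine content: the classical inequality $\rho(G) \le \alpha(G)$ and the trailing bound $\MOF(G) \le n-1$.

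For $\rho(G) \le \alpha(G)$, I would fix a minimum vertex-disjoint path cover $P_1,\ldots,P_{\rho(G)}$ of $G$ and select one endpoint $v_i$ from each $P_i$. The set $\{v_1,\ldots,v_{\rho(G)}\}$ must be independent: otherwise, some edge $v_iv_j$ would let us splice $P_i$ and $P_j$ into a single path, producing a path cover with strictly fewer parts and contradicting minimality. (A trivial one-vertex $P_i$ is handled the same way, by appending $v_i$ to the neighboring path.) For $\MOF(G) \le n-1$, I would let $D$ be an arbitrary orientation of the non-empty graph $G$. Since $D$ contains at least one arc, say $(u,v)$, coloring $S=V(G)\setminus\{v\}$ makes $u$ a colored vertex whose only non-colored out-neighbor is $v$, so $u$ forces $v$ in a single step and $S$ is a $1$-forcing set. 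Hence $F(D) \le n-1$ for every orientation, and taking the maximum over $D$ yields $\MOF(G) \le n-1$.

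I do not anticipate any real obstacle; this statement is essentially a bookkeeping observation that stitches together pieces established (or about to be established) elsewhere in the paper. The only mild subtlety is logical placement: the two identities invoked live in later subsections, so in strict logical order this observation is foreshadowing, to be made fully rigorous once Subsections \ref{sec: mof} and \ref{sec: MOF} are in hand.
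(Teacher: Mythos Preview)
Your proposal is correct and mirrors the paper's own justification almost exactly: the paper cites Observation \ref{monotonic_MOF} for the monotonicity runs, forward-references Subsections \ref{sec: mof} and \ref{sec: MOF} for the two equalities, and sketches $\rho(G)\le\alpha(G)$ by the same endpoint-of-a-minimum-path-cover argument you give. Your explicit verification of $\MOF(G)\le n-1$ via the set $V\setminus\{v\}$ is a detail the paper leaves implicit, but it is exactly the intended reasoning.
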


Furthermore, the last inequality, $\MOF(G) \leq n-1$, is sharp for complete bipartite graphs with all edges oriented from one part towards the other. 

%%%%%%%%%%%%%%%%%%%%%%%%%%%%%%%%%%%%%%%%%%%%%%%%%%%%%%%%%%%%%%%%%%%%%%%%%%%%%%%%%%%%%%%%%%%%%%%%%%%%%%%%%%%%%%%%%%%%%%%%%%%
\subsection{The minimum oriented $k$-forcing number -- $\mof_k(G)$} \label{sec: mof}

In this section, we focus our attention on $\mof_k(G)$. For what follows, we will use the term \emph{$(k+1)$-tree} to refer to tree with maximum degree at most $k+1$. A set of vertex-disjoint $(k+1)$-trees that cover the vertices of a graph $G$ is called a \emph{$(k+1)$-tree cover} for $G$. The cardinality of a smallest $(k+1)$-tree cover for $G$ is called the \emph{$(k+1)$-tree cover number} of $G$ and is denoted $T_k(G)$. Note that a $2$-tree is a path, and so $T_1(G)$ is the minimum number of vertex-disjoint paths needed to cover the vertices of $G$. In \cite{Hogben2010}, it is shown that the path cover number is a lower bound for the zero forcing number of a graph, $\rho(G) \leq F(G)=Z(G)$. As we shall see, the minimum oriented $1$-forcing number of a graph is equal to its path cover number.

Observe that if $T$ is a $(k+1)$-tree, then $T$ can be rooted at a leaf and oriented so that every vertex of $T$ has out-degree at most $k$. This is done by directing the root towards its neighbors, then each of these towards their neighbors (that aren't the root), and so on. In this case we say that $T$ is oriented away from the root. Conversely, any tree that can be rooted at a leaf and oriented away from the root so that each vertex has out-degree at most $k$ is a $(k+1)$-tree. Hence if $D$ is an orientation of graph $G$, $S$ a $k$-forcing set for $D$, and $\mathscr{F}$ a set of forcing chains for $S$, then each $k$-forcing chain in $\mathscr{F}$ is a $(k+1)$-tree. This implies that $F_k(D) \geq T_k(G)$ for all orientations $D$ of $G$ and hence $\mof_k(G) \geq T_k(G)$. In fact $\mof_k(G) = T_k(G)$, which is proven below.

\begin{theorem}\label{thm: mof_k = T_k}
For every graph $G$ and every positive integer $k$, $\mof_k(G) = T_k(G)$.
\end{theorem}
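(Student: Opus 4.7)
The inequality $\mof_k(G) \geq T_k(G)$ has already been established in the paragraph preceding the theorem: any minimum $k$-forcing set for any orientation of $G$ gives rise to a set of $k$-forcing chains, each of which is a $(k+1)$-tree, and together they cover $V(G)$. My plan is therefore to prove the reverse inequality $\mof_k(G) \leq T_k(G)$ by constructing, from a minimum $(k+1)$-tree cover, an orientation $D$ of $G$ together with a $k$-forcing set of that orientation of size exactly $T_k(G)$.

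I will start from a minimum $(k+1)$-tree cover $\{T_1,\dots,T_r\}$ with $r=T_k(G)$. In each $T_i$ I pick a leaf $\ell_i$ (the single vertex, if $T_i$ is trivial) and orient $T_i$ away from $\ell_i$ in the usual rooted-tree fashion. Since $T_i$ has maximum degree at most $k+1$ and $\ell_i$ is a leaf, every vertex of $T_i$ acquires out-degree at most $k$ within $T_i$. For each vertex $v$, let $t(v)$ denote its distance from $\ell_i$ inside the unique tree $T_i$ of the cover that contains it; call $t(v)$ the \emph{tier} of $v$. The candidate initial colored set will be $S=\{\ell_1,\dots,\ell_r\}$, which is precisely the set of tier-$0$ vertices.

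The main obstacle is that $G$ may have edges that lie in no $T_i$, and a careless orientation of these ``extra'' edges could inflate some vertex's uncolored out-degree above $k$ at the wrong moment. I will handle this by orienting every extra edge $\{u,w\}$ from the endpoint of strictly larger tier toward the endpoint of strictly smaller tier, breaking ties arbitrarily; let $D$ be the resulting orientation. The design is tailored so that, when a vertex $u$ of tier $t$ is colored and tries to $k$-force on the next step, every one of its extra-edge out-neighbors has tier at most $t$ and hence is already colored, leaving only $u$'s at most $k$ children in $T_i$ as uncolored out-neighbors.

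Finally, I will verify by induction on the tier $t$ that after step $t$ of the $k$-forcing process starting from $S$, every vertex of tier at most $t$ is colored. The base case $t=0$ is immediate from $C_0=S$, and the inductive step follows from the orientation just described together with the $k$-color change rule applied to each tier-$t$ vertex in turn. Hence $S$ is a $k$-forcing set of $D$, giving $\mof_k(G)\leq F_k(D)\leq|S|=r=T_k(G)$, which combined with the reverse inequality completes the proof.
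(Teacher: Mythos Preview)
Your proposal is correct and follows essentially the same strategy as the paper: take a minimum $(k+1)$-tree cover, root each tree at a leaf, orient the trees away from their roots, orient every remaining edge of $G$ from the endpoint of higher level (your ``tier'') toward the one of lower level, and then verify level-by-level that the set of roots is a $k$-forcing set for this orientation. Your write-up is in fact a bit more careful than the paper's in two places --- you explicitly choose the root to be a leaf (so the root itself has out-degree at most $k$ in its tree), and your treatment of extra edges covers also non-tree edges with both endpoints in the same $T_i$, which the paper's phrasing overlooks --- but the underlying argument is the same.
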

\begin{proof}
We have already noted, in the exposition before the statement of this theorem, that $\mof_k(G) \geq T_k(G)$, so we only need to show that $\mof_k(G) \leq T_k(G)$. Let $\mathscr{T} = \{T_1, T_2, \ldots, T_t\}$ be a smallest $k$-ary tree cover for $G$ (thus $t = |\mathscr{T}| = T_k(G)$). For each integer $p$ between $1$ and $t$, let $s_p$ be the root of $T_p$, and partition the vertices of each $T_p$ into levels as follows: level 0 contains $s_p$, level 1 contains all of the vertices whose distance from $s_p$ is 1, level 2 contains all of the vertices whose distance from $s_p$ is 2, and so on. Order the vertices in each level and let $v_{p,q,r}$ denote the $r^{th}$ vertex in the $q^{th}$ level of $T_p$. Note that $v_{p,0,1} = s_p$. 

Orient each tree $T_p \in \mathscr{T}$ away from the root $s_p$. Since $\mathscr{T}$ is a $(k+1)$-tree cover, all edges in $G$ that have not been oriented have the form $e = \{v_{p,q,r}, v_{p',q',r'}\}$ with $p \neq p'$. Direct these remaining edges as follows: if $q \leq q'$ direct $v_{p',q',r'}$ towards $v_{p,q,r}$, and if $q > q'$ direct $v_{p,q,r}$ towards $v_{p',q',r'}$. Call this orientation $D$ and let $S$ be the set of roots of $\mathscr{T}$, that is, $S = \{s_p = v_{p,0,1} : 1 \leq p \leq t\}$ (note that $|S| = T_k(G))$.

Color each vertex of $S$. Since $T_p$ is a $(k+1)$-tree rooted at $s_p$ for all $1 \leq p \leq t$, each $s_p \in S$ has at most $k$ out-neighbors in $T_p$ and all other out-neighbors are in $S$. Thus $s_p$ will $k$-force each non-colored out-neighbor in $T_p$ to become colored on the first application of the $k$-color change rule. Observe now that if $x = v_{p',q',r'}$ is an out-neighbor of $y = v_{p,q,r}$, where $p \neq p'$, then the index of the level containing $x$ in $T_{p'}$ is no more than the index of the level containing $y$ in $T_p$. Hence each out-neighbor $v$ of $s_p$ that was $k$-forced to become colored on the first application of the $k$-color change rule has at most $k$ non-colored out-neighbors -- namely the out-neighbors in $T_p$ -- and thus $v$ will $k$-force each of its non-colored out-neighbors to become colored on the second application of the $k$-color change rule. This continues until all of the vertices of $D$ become colored. Thus, $S$ is a $k$-forcing set for $D$, which means that $\mof_k(G) \leq F_k(D) \leq |S| = T_k(G)$.
\end{proof}

Theorem \ref{thm: mof_k = T_k} has several interesting corollaries, which we now list.

\begin{corollary} \label{cor: mof_k = 1 iff spanning k+1-tree}
For any graph $G$, $\mof_k(G) = 1$ if and only if $G$ has a spanning $(k+1)$-tree.
\end{corollary}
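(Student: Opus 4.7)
The plan is to derive this corollary directly from Theorem \ref{thm: mof_k = T_k}, which establishes $\mof_k(G) = T_k(G)$. Since $T_k(G)$ counts the minimum number of vertex-disjoint $(k+1)$-trees needed to cover $V(G)$, the corollary reduces to the tautology that $T_k(G) = 1$ exactly when $G$ admits a $(k+1)$-tree cover consisting of a single tree, that is, a spanning $(k+1)$-tree.

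More concretely, for the forward direction I would assume $\mof_k(G) = 1$, invoke Theorem \ref{thm: mof_k = T_k} to conclude $T_k(G) = 1$, and then unpack the definition: a $(k+1)$-tree cover of size $1$ is a single $(k+1)$-tree $T$ whose vertex set equals $V(G)$, which is by definition a spanning $(k+1)$-tree of $G$. For the reverse direction, if $G$ has a spanning $(k+1)$-tree $T$, then $\{T\}$ is itself a $(k+1)$-tree cover of $G$ of size $1$, so $T_k(G) \leq 1$; since every graph with at least one vertex satisfies $T_k(G) \geq 1$ (one needs at least one tree to cover a nonempty vertex set), this forces $T_k(G) = 1$, and again by Theorem \ref{thm: mof_k = T_k} we obtain $\mof_k(G) = 1$.

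There is no real obstacle here since the work has been done in Theorem \ref{thm: mof_k = T_k}; the only thing to be careful about is the trivial boundary case of a single-vertex graph, where the lone vertex is both a spanning $(k+1)$-tree and the unique $k$-forcing set, confirming the equivalence. The proof can be stated in just a sentence or two, simply chaining the equivalences $\mof_k(G) = 1 \iff T_k(G) = 1 \iff G$ has a spanning $(k+1)$-tree.
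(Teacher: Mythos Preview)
Your proposal is correct and matches the paper's approach exactly: the paper states this result as an immediate corollary of Theorem~\ref{thm: mof_k = T_k} without giving a separate proof, and your argument simply unpacks the equivalence $\mof_k(G)=1 \iff T_k(G)=1 \iff G$ has a spanning $(k+1)$-tree.
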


Taking $k=1$, a spanning $T_1(G)$ is a spanning $2$-tree cover of $G$, which is the same as a path cover. Thus, we arrive at the result below.

\begin{corollary}\label{pathcover}
If $G$ is a graph, then $\mof(G) = \rho(G)$.
\end{corollary}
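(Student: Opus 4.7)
The plan is to derive this as an immediate specialization of Theorem \ref{thm: mof_k = T_k}. Setting $k=1$ in that theorem gives $\mof(G) = \mof_1(G) = T_1(G)$, so the entire task reduces to identifying $T_1(G)$ with the path covering number $\rho(G)$.

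For this identification, I would first recall the definition: a $(k+1)$-tree is a tree of maximum degree at most $k+1$, so a $2$-tree is a tree of maximum degree at most $2$. Such a tree is precisely a path (including the trivial path on one vertex), since a connected acyclic graph with $\Delta \le 2$ has two vertices of degree at most $1$ and all other vertices of degree exactly $2$, which is the defining structure of a path. Conversely, every path is a tree of maximum degree at most $2$. Hence a $2$-tree cover of $G$ is the same object as a collection of vertex-disjoint paths covering $V(G)$, and the minimum sizes of these two covers coincide, i.e.\ $T_1(G) = \rho(G)$.

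Combining these two observations yields
\[
\mof(G) = \mof_1(G) = T_1(G) = \rho(G),
\]
which is the desired equality. No step here is substantive in itself; the real content was already packaged into Theorem \ref{thm: mof_k = T_k}, whose proof handled both inequalities via the construction of $k$-forcing chains from a minimum $(k+1)$-tree cover together with the appropriate orientation. The only thing to be careful about is the degenerate case where some component of the $2$-tree cover is a single vertex, but this fits perfectly into the framework of Theorem \ref{thm: mof_k = T_k}, since an isolated root has no out-neighbors to force and is simply included in the initial colored set $S$.
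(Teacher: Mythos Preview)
Your proposal is correct and follows exactly the same approach as the paper: specialize Theorem \ref{thm: mof_k = T_k} to $k=1$ and observe that a $2$-tree is precisely a path, so $T_1(G)=\rho(G)$. The paper states this in one sentence; your version just spells out the identification of $2$-trees with paths in a bit more detail.
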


A graph $G$ is \emph{path Hamiltonian} (also called \emph{traceable}) if there exists a path in $G$ that contains every vertex of $G$ exactly once. Hence, $G$ is path Hamiltonian if and only if $\rho(G)=1$. For references about the path cover number (also known as the path partition number) of a graph, and many more examples where $\mof(G)$ can be applied meaningfully, see \cite{path-partition}.

%%%%%%%%%%%%%%%%%%%%%%%%%%%%%%%%%%%%%%%%%%%%%%%%%%%%%%%%%%%%%%%%%%%%%%%%%%%%%%%%%%%%%%%%%%%%%%%%%%%%%%%%%%%%%%%%%%%%%%%%%%%

\subsection{Upper bounds for $\mof_k(G)$}\label{mof upper}

We now turn our attention to the presentation of some upper bounds on the minimum oriented $k$-forcing number. For this we keep in mind that, for any orientation $D$ of a simple graph $G$, the oriented $k$-forcing number is already an upper bound on $\mof_k(G)$.  

\begin{proposition}
If $G$ is a non-empty graph with maximum degree $\Delta$, then
\begin{equation}
\nonumber \mof_k(G) \leq \max \{n - k, n - \Delta \}.
\end{equation}
\end{proposition}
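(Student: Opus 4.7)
The plan is to exhibit a single orientation together with a $k$-forcing set of the required size. Pick a vertex $v$ of $G$ with $d(v) = \Delta$, and orient every edge incident to $v$ away from $v$ so that $v$ has out-degree $\Delta$; orient all remaining edges of $G$ arbitrarily. Call this orientation $D$.

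Now let $N'(v)$ be any subset of $N(v)$ of cardinality $\min\{k,\Delta\}$, and set
\[
S \;=\; V(G)\setminus N'(v).
\]
Then $|S| = n - \min\{k,\Delta\} = \max\{n-k,\,n-\Delta\}$, and it remains only to check that $S$ is an (oriented) $k$-forcing set of $D$. After coloring $S$, the only non-colored vertices are those in $N'(v)$; since $v\in S$ is colored and its out-neighborhood in $D$ is exactly $N(v)$, the number of non-colored out-neighbors of $v$ is $|N'(v)| = \min\{k,\Delta\}\le k$. Hence on the first application of the $k$-color change rule, $v$ simultaneously $k$-forces every vertex of $N'(v)$, and all of $V(G)$ becomes colored. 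This yields $F_k(D) \le |S| = \max\{n-k, n-\Delta\}$, and by the definition of $\mof_k(G)$ the desired inequality follows.

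There is essentially no obstacle: the only point worth flagging is that the two quantities $n-k$ and $n-\Delta$ in the bound arise from the two regimes $\Delta \ge k$ and $\Delta \le k$, and these are handled uniformly by choosing $|N'(v)| = \min\{k,\Delta\}$ rather than splitting into cases. The orientation of the edges not incident to $v$ is irrelevant to the argument, because the process terminates after the single forcing step performed by $v$.
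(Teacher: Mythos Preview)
Your proof is correct and follows essentially the same approach as the paper: construct an orientation in which a maximum-degree vertex $v$ has out-degree $\Delta$, take $S$ to be all vertices except $\min\{k,\Delta\}$ out-neighbors of $v$, and observe that $v$ $k$-forces the remaining vertices in a single step. The only cosmetic difference is that you explicitly note the orientation of the non-incident edges is irrelevant, which the paper leaves implicit.
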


\begin{proof}
Let $D$ be an orientation of $G$ for which $\Delta^+(D) = \Delta$ (such an orientation is easily seen to exist by taking a vertex of maximum degree and orienting all of its incident edges towards its neighbors). Let $v$ be a vertex of $D$ with maximum out-degree, $d^+(v) = \Delta^+(D)$. Let $S$ be the set of all vertices of $D$ except for $\min \{k, \Delta\}$ out-neighbors of $v$. Color all of the vertices of $S$ and observe that $v$ will $k$-force its neighbors on the first step of the $k$-forcing process. Since all other vertices are colored initially, $S$ is a $k$-forcing set for $D$. Now,
\begin{equation}
\nonumber |S| = n - \min \{k, \Delta\} = \max \{n - k, n - \Delta\}.
\end{equation}
Therefore, $\mof_k(G) \leq F_k(D) \leq |S|$, completing the proof.
\end{proof}

A balanced orientation $D$ of a graph $G$ is one for which the in-degrees and out-degrees are equal, or only different by one, for every vertex. In particular, we say that $D$ is \emph{balanced} if $|d^+(v) - d^-(v)| \leq 1$ for every vertex $v$ of $D$. It is not hard to see that every graph has a balanced orientation (add a vertex and join it to every vertex of odd degree so that graph is Eulerian, orient the edges along the Eulerian trail, then remove the added vertex, see \cite{BalancedOrientation} for more). By considering a balanced orientation, we can use Corollary \ref{components} to bound $\mof(G)$ from above.

\begin{theorem}
If $G$ is a connected graph with order $n$, minimum degree $\delta\ge 2$, and maximum degree $\Delta$, then
\begin{equation}
\nonumber \mof(G) \leq \frac{ (\lceil \frac{\Delta}{2} \rceil -1) n + r}{ \lceil \frac{\Delta}{2} \rceil},
\end{equation}
where $r$ is the cardinality of a smallest reaching set over all balanced orientations of $G$.
\end{theorem}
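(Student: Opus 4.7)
The plan is to choose a balanced orientation $D$ that achieves the minimum reaching set size $r$ and then apply Corollary~\ref{components} with $k=1$ to this $D$, after bounding $\Delta^+(D)$ in terms of $\Delta$. The key quantitative input is that, for any vertex $v$ in a balanced orientation, the equality $d^+(v)+d^-(v)=d(v)$ together with the balance inequality $|d^+(v)-d^-(v)|\le 1$ forces $d^+(v)\le \lceil d(v)/2\rceil \le \lceil \Delta/2\rceil$. Taking a maximum over $v$ yields $\Delta^+(D)\le \lceil \Delta/2\rceil$. The hypothesis $\delta\ge 2$ guarantees $\Delta^+(D)\ge 1$, so the inequality $k\le \Delta^+(D)$ needed to invoke Corollary~\ref{components} holds with $k=1$.

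First I would pick $D$ to be a balanced orientation of $G$ whose minimum reaching set has the smallest possible cardinality, i.e.\ exactly $r$; such an orientation exists because $G$ admits at least one balanced orientation (by the standard Eulerian argument referenced in the paper), and the minimum over a finite set is attained. Next I would apply Corollary~\ref{components} with $k=1$ to $D$ to obtain
\[
\mof(G)\le F(D)\le \frac{(\Delta^+(D)-1)n+r}{\Delta^+(D)}.
\]
The final step is a short monotonicity argument: rewrite the right-hand side as $n-\tfrac{n-r}{\Delta^+(D)}$, observe that $n-r\ge 0$ since a reaching set is a subset of $V(G)$, and conclude that the expression is non-decreasing in $\Delta^+(D)$. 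Substituting the upper bound $\Delta^+(D)\le \lceil \Delta/2\rceil$ then gives the desired inequality.

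The main obstacle, and really the only subtle point, is verifying that one may substitute $\lceil\Delta/2\rceil$ for $\Delta^+(D)$ in the upper bound produced by Corollary~\ref{components}; this is exactly the monotonicity check above and it goes through cleanly once the expression is rewritten as $n-(n-r)/\Delta^+(D)$. I would also briefly remark on why $\delta\ge 2$ is the correct hypothesis: it guarantees that in every balanced orientation each vertex has both $d^+(v)\ge 1$ and $d^-(v)\ge 1$, so that $\Delta^+(D)\ge 1$ and the bound from Corollary~\ref{components} is applicable, and it ensures $\lceil\Delta/2\rceil\ge 1$ so that the denominator in the stated inequality is positive.
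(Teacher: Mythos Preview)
Your proposal is correct and follows essentially the same approach as the paper: choose a balanced orientation $D$ realizing $r$, bound $\Delta^+(D)\le \lceil \Delta/2\rceil$ via the balance condition, apply Corollary~\ref{components} with $k=1$, and finish with monotonicity of $\frac{(x-1)n+r}{x}$ in $x$. If anything, your write-up is slightly more explicit than the paper's on the monotonicity step (rewriting as $n-(n-r)/\Delta^+(D)$) and on the role of the hypothesis $\delta\ge 2$.
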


\begin{proof}
Let $D$ be a balanced orientation for $G$ which has a reaching set of smallest order $r$. Then $|d^+(v) - d^-(v)| \leq 1$ for all vertices $v$ of $D$. Let $u$ be a vertex of $D$ such that $d(u)=d^-(u)+d^+(u)$ and $d^+(u) = \Delta^+(D)$. Note that $d^-(u) \geq 1$, since the minimum degree of $G$ is at least $2$, and the orientation $D$ is balanced. Thus, $\Delta^+(D) = d^+(u) \leq \lceil \frac{d(u)}{2} \rceil \leq \lceil \frac{\Delta}{2} \rceil$. Applying Corollary \ref{components} when $k=1$, we have
\begin{equation}
\nonumber F(D) \leq \frac{(\Delta^+(D) -1)n+r}{\Delta^+(D)} \leq \frac{(\lceil \frac{\Delta}{2} \rceil -1)n+r}{\lceil \frac{\Delta}{2} \rceil},
\end{equation}
where the second inequality follows from monotonicity. The theorem is now proven since we combine the above inequality with the fact that $\mof(G) \leq F(D)$.
\end{proof}

If $G$ has a reachable and balanced orientation, then we may replace $r$ by $1$ in the above theorem. Unfortunately, this is not always the case, as one can construct graphs with no reachable and balanced orientations. However, we make use of Robbin's Theorem, and its extension due to Nash-Williams (see \cite{well-balanced}), and observe that every $2$-edge connected graph has a reachable (connected) and balanced orientation. In light of this, we can simplify the above result with the following theorem.

\begin{theorem}
If $G$ is a $2$-edge connected graph with order $n$, and maximum degree $\Delta$, then
\begin{equation}
\nonumber \mof(G) \leq \frac{ (\lceil \frac{\Delta}{2} \rceil -1) n + 1}{ \lceil \frac{\Delta}{2} \rceil}.
\end{equation}
\end{theorem}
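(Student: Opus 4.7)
The plan is to reduce the theorem to the previous one by exhibiting a single orientation of $G$ that is simultaneously balanced and reachable with a reaching set of size~$1$. If such an orientation exists, then in the previous theorem the parameter $r$ may be taken equal to $1$, and the stated bound follows immediately.

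First I would invoke the Nash-Williams extension of Robbins' theorem cited in the paragraph above the statement: every $2$-edge-connected graph admits a \emph{well-balanced} orientation, that is, an orientation $D$ which is strongly connected and satisfies $|d^+(v)-d^-(v)|\le 1$ at every vertex. Strong connectivity implies that $D$ is reachable (indeed strongly reachable), and in particular that any single vertex forms a reaching set, so a minimum reaching set of $D$ has order $r=1$. Since $G$ has minimum degree at least $2$ (a consequence of $2$-edge connectedness), the balance hypothesis used in the previous theorem is met as well.

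Next I would apply the previous theorem to this particular orientation $D$. Its proof shows that for any balanced orientation $D$ of $G$ with reaching set of order $r$, one has
\begin{equation*}
F(D)\ \le\ \frac{(\Delta^+(D)-1)n+r}{\Delta^+(D)}\ \le\ \frac{(\lceil\Delta/2\rceil-1)n+r}{\lceil\Delta/2\rceil},
\end{equation*}
where the second inequality uses monotonicity of the function $x\mapsto (x-1)/x$ together with the bound $\Delta^+(D)\le\lceil\Delta/2\rceil$ coming from the balance condition. Substituting $r=1$ and using $\mof(G)\le F(D)$ yields the desired inequality.

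The only genuine point of care — and what I would flag as the main obstacle — is the appeal to Nash-Williams: one must be sure that the \emph{same} orientation can be taken to be both strongly connected and balanced in the sense $|d^+(v)-d^-(v)|\le 1$. This is exactly the content of Nash-Williams' well-balanced orientation theorem (the stronger form of Robbins' theorem), which is explicitly cited in the paragraph preceding the statement, so no further argument is required. The remainder of the proof is a direct substitution into the previous theorem.
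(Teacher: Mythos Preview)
Your proposal is correct and follows essentially the same approach as the paper: the paper's argument, given in the exposition immediately preceding the theorem rather than as a formal proof, is precisely to invoke the Nash-Williams extension of Robbins' theorem to obtain a balanced orientation that is (strongly) reachable, so that $r=1$ in the preceding theorem. Your additional remark that $2$-edge connectedness guarantees $\delta\ge 2$ (needed for the previous theorem's hypothesis) is a detail the paper leaves implicit.
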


This shows, among other things, that for bridge-less cubic-graphs, $\mof(G) \leq \frac{n+1}{2}$. It makes it natural to ask if there is a constant $c <1$ such that $\mof_k(G) < cn$ for any graph $G$. Our next theorem says that the answer to this question is no, and we can have graphs where $\mof(G)$ is arbitrarily close to $n$.

\begin{example}\label{star}
Let $G=K_{1,n-1}$ be the star on $n\ge 2$ vertices. For this, graph, if $k<\Delta=n-1$, then it is not hard to see that $\mof_k(K_{1,n-1})=n-k-1$. This can be realized by orienting $k$ edges away from the central vertex and $\Delta-k$ edges towards the central vertex. Namely, if $v$ is the center, then $d^+(v)=k$ and $d^-(v)=\Delta -k$. The $k$-forcing number of this orientation is $\Delta -k=n-k-1$, and one can quickly see that any other orientation has $k$-forcing number at least this large. Now, for any constant $0<c<1$, one can chose $n$ large enough with respect to $k$ so that $\mof_k(K_{1,n-1})>cn$.
\end{example}

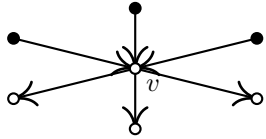
\begin{figure}[htb]
\begin{center}

\begin{tikzpicture}[scale=.8,style=thick,x=1cm,y=1cm]
\def\vr{2.5pt} % \vr = vertex radius;
% define vertices

\path (13,2) coordinate (u1);

\path (12,1.5) coordinate (v6);

\path (14,2) coordinate (v5);

\path (16,1.5) coordinate (v4);
\

\path (14,0) coordinate (v2);
\path (12,.5) coordinate (v1);
\path (14,1) coordinate (w);
\path (16,.5) coordinate (v3);
\draw[black, arrows={->[line width=1pt,black,length=3mm,width=3.5mm]}]  (w) -- (v1);
\draw[black, arrows={->[line width=1pt,black,length=3mm,width=3.5mm]}]  (w) -- (v2);
\draw[black, arrows={->[line width=1pt,black,length=3mm,width=3.5mm]}]  (w) -- (v3);
\draw[black, arrows={->[line width=1pt,black,length=3mm,width=3.5mm]}]  (v4) -- (w);
\draw[black, arrows={->[line width=1pt,black,length=3mm,width=3.5mm]}]  (v5) -- (w);
\draw[black, arrows={->[line width=1pt,black,length=3mm,width=3.5mm]}]  (v6) -- (w);

\draw (w) [fill=white] circle (\vr);

\draw (v1) [fill=white] circle (\vr);
\draw (v2) [fill=white] circle (\vr);
\draw (v3) [fill=white] circle (\vr);
\draw (v4) [fill=black] circle (\vr);
\draw (v5) [fill=black] circle (\vr);
\draw (v6) [fill=black] circle (\vr);

%\draw (14,-0.75) node {(c)};

\draw[anchor = north west] (w) node {{\small $v$}};

%\draw[anchor = south] (v) node {{\small $v$}};
%%%%%%%%%
\end{tikzpicture}
\end{center}
\vskip -0.7 cm
\caption{An illustration of Example~\ref{star} with $k=3$}
\end{figure}

%%%%%%%%%%%%%%%%%%%%%%%%%%%%%%%%%%%%%%%%%%%%%%%%%%%%%%%%%%%%%%%%%%%%%%%%%%%%%%%%%%%%%%%%%%%%%%%%%%%%%%%%%%%%%%%%%%%%%%%%%%%

\subsection{The maximum oriented $k$-forcing number -- $\MOF_k(G)$} \label{sec: MOF}

In this section we explore the maximum oriented $k$-forcing number of a graph $G$, denoted $\MOF_k(G)$. Basic properties of $\MOF_k(G)$ are discussed in Section \ref{sec: MOF basic properties}. In Section \ref{sec: MOF lower bounds}, we use these properties and other basic results to prove some general lower bounds for $\MOF_k(G)$. In Section \ref{trees} we discuss the maximum oriented $k$-forcing number for trees, which includes the interesting result that $\MOF(T)=\alpha(T)$ for every tree $T$.

We remark that in a forthcoming paper, the first and third author present a detailed study of the maximum oriented forcing number of complete graphs~\cite{MOF_Kn}. In particular, they show $\MOF(K_n)\ge \frac{3n-9}{4}$, and for $n\ge 2$, that $\MOF(K_n) \ge n - \frac{2n}{\log_2(n)}$. These results merit their own study, so we have omitted them from this paper. 

\subsubsection{Basic properties of $\MOF_k(G)$} \label{sec: MOF basic properties}

We begin by showing that, unlike $\mof(G)$ (recall Example~\ref{not monotone subgraph}) and the $k$-forcing number for a given oriented graph, the maximum oriented $k$-forcing number is monotone with respect to induced subgraphs. 

\begin{proposition} \label{prop: MOF monotonicity}
If $H$ is any induced subgraph of a graph $G$, then $\MOF_k(G) \geq \MOF_k(H)$.
\end{proposition}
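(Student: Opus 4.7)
The plan is to build, from an optimal orientation of $H$, an orientation of $G$ whose $k$-forcing number is at least $\MOF_k(H)$. First I would choose an orientation $D_H$ of $H$ with $F_k(D_H) = \MOF_k(H)$, set $W = V(G) \setminus V(H)$, and extend $D_H$ to an orientation $D$ of $G$ by directing every edge between $V(H)$ and $W$ from $V(H)$ to $W$, and orienting the edges of $G[W]$ arbitrarily. The crucial feature of this choice is that no arc of $D$ enters $V(H)$ from $W$, so during the $k$-forcing process on $D$, only vertices of $V(H)$ can $k$-force vertices of $V(H)$.

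Next, let $S$ be any $k$-forcing set for $D$; the key claim is that $S \cap V(H)$ is a $k$-forcing set for $D_H$. To prove this, let $C_t$ denote the set of colored vertices at step $t$ of the $k$-forcing process on $D$ starting from $S$, and let $C'_t$ denote the colored set at step $t$ of the $k$-forcing process on $D_H$ starting from $S \cap V(H)$. I would show by induction on $t$ that $C_t \cap V(H) \subseteq C'_t$. The base case $t=0$ is immediate. For the inductive step, any $u \in V(H)$ colored at step $t+1$ in $D$ is $k$-forced by some $v$, which must lie in $V(H)$ by the observation above. Then $v$ has at most $k$ non-colored $D$-out-neighbors at step $t$; since the $D_H$-out-neighbors of $v$ form a subset of its $D$-out-neighbors and all lie in $V(H)$, and since by the inductive hypothesis $C'_t \supseteq C_t \cap V(H)$, one obtains the inequality $|N_{D_H}^+(v) \setminus C'_t| \leq |N_D^+(v) \setminus C_t| \leq k$. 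Hence $v$ $k$-forces all of its non-colored $D_H$-out-neighbors by step $t+1$ in the $D_H$-process, so $u \in C'_{t+1}$.

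Because $S$ eventually colors all of $V(G)$ in $D$, the claim gives $V(H) \subseteq C'_t$ for some $t$, so $S \cap V(H)$ is a $k$-forcing set of $D_H$. Therefore $|S| \geq |S \cap V(H)| \geq F_k(D_H) = \MOF_k(H)$, which gives $F_k(D) \geq \MOF_k(H)$ and hence $\MOF_k(G) \geq F_k(D) \geq \MOF_k(H)$, as desired. The only real subtlety is keeping the two forcing processes synchronized in the induction: passing from $D$ to $D_H$ can only shrink the out-neighborhood of a $V(H)$-vertex, while by the inductive hypothesis the colored set it compares against only grows, so any $D$-force performed at step $t+1$ carries through to $D_H$. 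Trivial edge cases such as $V(H)=\emptyset$ or $V(H)=V(G)$ pose no difficulty.
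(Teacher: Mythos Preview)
Your proof is correct and follows essentially the same approach as the paper: both construct the orientation $D$ by taking an optimal orientation on $H$ and directing all $H$--$W$ edges outward, then argue that any $k$-forcing set of $D$ restricted to $V(H)$ must be a $k$-forcing set for $D_H$. Your version is more detailed than the paper's, supplying the explicit time-step induction that justifies the restriction claim, whereas the paper asserts this more tersely.
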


\begin{proof}
Let $D$ be an orientation of $G$ for which the oriented subgraph induced by $H$ has $k$-forcing number equal to $\MOF_k(H)$, while every edge between $H$ and $V-H$ is oriented away from $H$ and towards $V-H$. Now, observe that the vertices of $H$ can only be $k$-forced by other vertices in $H$. Thus, any oriented $k$-forcing set of $D$ must included the vertices of $H$ needed to realize $\MOF_k(H)$, plus some maybe some additional vertices of $V-H$. Therefore, $\MOF_k(H) \leq F_k(D) \leq \MOF_k(G)$, which completes the proof.
\end{proof}

An edge $e$ of a connected graph $G$ is a \emph{bridge} (sometimes called a cut-edge) if the graph obtained from $G$ by removing $e$ has two components. We denote this graph by $G \setminus \{e\}$.

\begin{proposition}\label{bridge}
Let $G$ be a connected graph and $e \in E$ a bridge in $G$. If $G_1$ and $G_2$ are the components of $G \setminus \{e\}$, then $\MOF_k(G) \leq \MOF_k(G_1) + \MOF_k(G_2)$.
\end{proposition}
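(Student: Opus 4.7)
The plan is to fix an orientation $D$ of $G$ that realizes $\MOF_k(G)$, split it into its two induced sub-orientations $D_1$ and $D_2$ on $G_1$ and $G_2$, and show that the union of minimum $k$-forcing sets of $D_1$ and $D_2$ is a $k$-forcing set of $D$. This will yield
\[
\MOF_k(G) = F_k(D) \le F_k(D_1) + F_k(D_2) \le \MOF_k(G_1) + \MOF_k(G_2),
\]
where the rightmost inequality holds by definition of $\MOF_k$ applied to $G_1$ and $G_2$.

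First, I would set up notation carefully. Let $D$ be an orientation of $G$ with $F_k(D) = \MOF_k(G)$, and let the bridge $e$ join $u \in V(G_1)$ to $v \in V(G_2)$; without loss of generality $D$ orients $e$ as the arc $(u,v)$. Let $D_i$ be the oriented graph obtained from $D$ by deleting the arc of $e$ and keeping only the vertices and arcs lying in $G_i$, for $i=1,2$. Choose minimum $k$-forcing sets $S_1$ of $D_1$ and $S_2$ of $D_2$, so $|S_i| = F_k(D_i)$.

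The key claim is that $S := S_1 \cup S_2$ is a $k$-forcing set of $D$. The crucial observation, which is also where the asymmetry of the color-change rule matters, is that the $k$-forcing rule only inspects out-neighbors. Every vertex $w \in V(G_2)$ has the same out-neighborhood in $D$ as in $D_2$ (the bridge $(u,v)$ contributes only an in-arc at $v$, never an out-arc from $G_2$). Therefore running the oriented $k$-forcing process on $D$ from $S$ reproduces the $D_2$-forcing process on $V(G_2)$ step-for-step, eventually coloring all of $V(G_2)$, and in particular coloring $v$. Now on the $G_1$ side, the only vertex whose out-neighborhood in $D$ differs from that in $D_1$ is $u$, which in $D$ has one extra out-neighbor, namely $v$. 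Since $v$ is colored once the $D_2$-side has completed, from that point on the number of non-colored out-neighbors of $u$ in $D$ equals the number in $D_1$. So the $D_1$-forcing process from $S_1$ runs in $D$ exactly as in $D_1$, and all of $V(G_1)$ becomes colored.

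The main (small) obstacle is just the extra out-arc at $u$; the argument above handles it by coloring $V(G_2)$ first so that $v$ never counts as a non-colored out-neighbor of $u$ when $u$ needs to force. One must also note that the two processes can be interleaved without issue, since the $D_2$-forcing steps never require any vertex of $G_1$ to be colored. Combining the claim with the chain of inequalities displayed at the start completes the proof.
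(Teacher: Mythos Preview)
Your proof is correct and follows essentially the same approach as the paper: fix an orientation $D$ realizing $\MOF_k(G)$, take the induced orientations $D_1,D_2$ and minimum $k$-forcing sets $S_1,S_2$, observe that (with the bridge oriented from $G_1$ to $G_2$) the $D_2$-side forces first unaffected and then the $D_1$-side forces once $v$ is colored, yielding $\MOF_k(G)=F_k(D)\le |S_1|+|S_2|\le \MOF_k(G_1)+\MOF_k(G_2)$. Your write-up is in fact a bit more explicit than the paper's about why the out-neighborhoods line up correctly on each side.
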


\begin{proof}
Let $D$ be any orientation for $G$ for which $F_k(D) = \MOF_k(G)$, and let $D_1$ and $D_2$ be the orientations for $G_1$ and $G_2$, respectively, inherited from $D$. Let $S_1$ and $S_2$ be smallest $k$-forcing sets for $D_1$ and $D_2$, respectively, and let $u \in G_1$ and $v \in G_2$ be the end-vertices of $e$. We assume without loss of generality that $u$ is directed toward $v$. Color the vertices in $S_1 \cup S_2$ black and all remaining vertices white. Observe that $S_2$ will $k$-force all of $D_2$ without being affected by the vertices in $D_1$. Once all of the vertices of $D_2$ have been colored black, $S_1$ will then $k$-force all of $D_1$. Thus $S_1 \cup S_2$ is a $k$-forcing set for $D$. It follows that,
\begin{equation}
\nonumber  \MOF_k(G) = F_k(D) \leq |S_1 \cup S_2| = |S_1| \cup |S_2| = F_k(D_1) + F_k(D_2) \leq \MOF_k(G_1) + \MOF_k(G_2).
\end{equation}
\end{proof}

\subsubsection{Lower bounds on $\MOF_k(G)$} \label{sec: MOF lower bounds}

We now prove some lower bound for the maximum oriented $k$-forcing number of a graph. The first is an application of Proposition \ref{Trivial Lower Bound}

\begin{proposition} \label{prop: MOF >= half min. degree}
For any graph $G$ with minimum degree $\delta$,
\begin{equation}
\nonumber \MOF_k(G)\geq \max \Big \{ \Big \lfloor \frac{\delta}{2} \Big \rfloor - k + 1, 1 \Big \}.
\end{equation}
\end{proposition}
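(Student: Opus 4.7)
The plan is to exhibit a single orientation $D$ of $G$ whose minimum out-degree is at least $\lfloor \delta/2 \rfloor$, and then apply Proposition \ref{Trivial Lower Bound} together with the tautology $\MOF_k(G) \geq F_k(D)$.

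First, I would take $D$ to be a balanced orientation of $G$, so that $|d^+(v) - d^-(v)| \leq 1$ for every $v \in V(G)$. The existence of such an orientation for any graph was recalled in Subsection \ref{mof upper} (add an auxiliary vertex joined to every odd-degree vertex to make the resulting graph Eulerian, orient the edges consecutively along an Eulerian trail, and then delete the auxiliary vertex).

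Second, I would observe that for every vertex $v$, the relations $d^+(v) + d^-(v) = d(v) \geq \delta$ and $|d^+(v) - d^-(v)| \leq 1$ force
\begin{equation*}
d^+(v) \geq \left\lfloor \frac{d(v)}{2} \right\rfloor \geq \left\lfloor \frac{\delta}{2} \right\rfloor,
\end{equation*}
so $\delta^+(D) \geq \lfloor \delta/2 \rfloor$. Applying Proposition \ref{Trivial Lower Bound} to $D$ then gives
\begin{equation*}
F_k(D) \geq \max\{\delta^+(D) - k + 1,\, 1\} \geq \max\left\{\left\lfloor \frac{\delta}{2} \right\rfloor - k + 1,\, 1\right\},
\end{equation*}
and since $\MOF_k(G) \geq F_k(D)$ by the very definition of $\MOF_k$, the desired inequality follows.

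There is no real obstacle here; the proof is essentially an application of two facts already in hand. The only small point worth flagging is that the floor-of-half degree bound must survive the case when $d(v)$ is odd, which it does precisely because $D$ is balanced (so the smaller of $d^+(v), d^-(v)$ is exactly $\lfloor d(v)/2 \rfloor$). Everything else is monotonicity of the floor function and invocation of Proposition \ref{Trivial Lower Bound}.
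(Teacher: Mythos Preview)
Your proposal is correct and follows essentially the same approach as the paper: take a balanced orientation $D$, observe that $\delta^+(D) \geq \lfloor \delta/2 \rfloor$, apply Proposition~\ref{Trivial Lower Bound}, and use $\MOF_k(G) \geq F_k(D)$. Your write-up is in fact slightly more careful than the paper's in justifying the inequality $d^+(v) \geq \lfloor d(v)/2 \rfloor$.
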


\begin{proof}
Clearly $\MOF_k(G)\geq 1$. Let $D$ be a balanced orientation of $G$ so that, 
\begin{equation}
\nonumber |\deg^+(v) - \deg^-(v)| \leq 1,
\end{equation}
for all vertices $v$ of $D$. In particular, $\delta^+(D) \geq \Big \lfloor \frac{\delta}{2} \Big \rfloor$. Then by Proposition \ref{Trivial Lower Bound}, we have, 
\begin{equation}
\nonumber F_k(D) \geq \max\{\delta^+ - k + 1, 1\} \geq \max \Big \{ \Big \lfloor \frac{\delta}{2} \Big \rfloor - k + 1, 1 \Big \}.
\end{equation}
The desired inequality now follows since $\MOF_k(G) \geq F_k(D)$.
\end{proof}

If $k=1$ and $\delta \geq 2$, then Proposition \ref{prop: MOF >= half min. degree} has the following immediate corollary.

\begin{corollary} \label{cor: MOF and min degree}
For any graph $G$ with minimum degree $\delta \geq 2$, $\MOF(G) \geq \big \lfloor \frac{\delta}{2} \big \rfloor.$
\end{corollary}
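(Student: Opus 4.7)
The plan is to obtain this corollary as a direct specialization of Proposition~\ref{prop: MOF >= half min. degree} to the case $k = 1$. That proposition asserts that for every graph $G$ with minimum degree $\delta$ and every positive integer $k$,
\[
\MOF_k(G) \geq \max\Big\{ \Big\lfloor \tfrac{\delta}{2} \Big\rfloor - k + 1,\; 1 \Big\}.
\]
Setting $k = 1$ causes the $-k+1$ term to vanish, and by our notational convention $\MOF(G) = \MOF_1(G)$, so the displayed bound collapses to
\[
\MOF(G) \geq \max\Big\{ \Big\lfloor \tfrac{\delta}{2} \Big\rfloor,\; 1 \Big\}.
\]

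Next I would use the hypothesis $\delta \geq 2$, which immediately yields $\lfloor \delta/2 \rfloor \geq 1$. Consequently, the first entry of the maximum dominates the trivial entry $1$, and the inequality simplifies to the promised $\MOF(G) \geq \lfloor \delta/2 \rfloor$.

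There is essentially no obstacle here, since the real work (producing a balanced orientation $D$ of $G$ with $\delta^+(D) \geq \lfloor \delta/2 \rfloor$ and invoking Proposition~\ref{Trivial Lower Bound}) has already been carried out in the proof of Proposition~\ref{prop: MOF >= half min. degree}. The only point worth emphasizing is the necessity of the hypothesis $\delta \geq 2$: if $\delta \in \{0,1\}$, then $\lfloor \delta/2 \rfloor = 0$ and the general bound only gives the uninformative $\MOF(G) \geq 1$; requiring $\delta \geq 2$ is exactly the threshold that lets the floor term dominate and yields a nontrivial statement.
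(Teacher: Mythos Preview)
Your proposal is correct and mirrors the paper's approach exactly: the corollary is stated as an immediate consequence of Proposition~\ref{prop: MOF >= half min. degree} by taking $k=1$ and using $\delta \ge 2$ to make $\lfloor \delta/2 \rfloor$ dominate the $1$ in the maximum.
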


If $D$ is an orientation of a graph $G$ and $v$ is any vertex of $D$ with $d^-(v) = 0$ (or equivalently, $d^+(v)=d(v)$), then $v$ must be in every $k$-forcing set for $D$. The result below extends this observation, by orienting all edges away from independent vertex sets. 

\begin{theorem} \label{MOF and independence number}
If $G$ is a graph, then $\MOF_k(G) \geq \alpha(G)$. Moreover, if $k \geq \Delta$, then $\MOF_k(G) = \alpha(G)$.
\end{theorem}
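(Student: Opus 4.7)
The plan is to prove the two parts separately, with the key insight being that orientations with many sources give large forcing numbers, while the Gallai--Milgram theorem caps the number of sources needed when $k$ is large.

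For the lower bound $\MOF_k(G) \geq \alpha(G)$, I would start by choosing a maximum independent set $I \subseteq V(G)$ with $|I| = \alpha(G)$. Since $I$ is independent, every edge of $G$ incident to $I$ has exactly one endpoint in $I$ and one endpoint in $V \setminus I$. Define an orientation $D$ of $G$ by directing every such edge from its $I$-endpoint to its $V \setminus I$-endpoint, and orienting the remaining edges (which lie entirely inside $V \setminus I$) arbitrarily. Every $v \in I$ then has $d^-(v) = 0$, so $v$ can never be $k$-forced by any other vertex; hence every $k$-forcing set of $D$ must contain $I$. This gives $F_k(D) \geq |I| = \alpha(G)$, and therefore $\MOF_k(G) \geq F_k(D) \geq \alpha(G)$.

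For the upper bound when $k \geq \Delta$, I would fix an arbitrary orientation $D$ of $G$ and show $F_k(D) \leq \alpha(G)$. The key observation is that in such a $D$, every vertex $v$ has $d^+(v) \leq \Delta \leq k$, so once $v$ is colored it will immediately $k$-force all of its out-neighbors on the next application of the color-change rule (since at most $k$ of them can be non-colored). By induction on the step of the $k$-forcing process, any set $S$ colors precisely the set of vertices reachable from $S$ by directed paths in $D$. Consequently, $S$ is a $k$-forcing set for $D$ if and only if $S$ is a reaching set of $D$. Now invoke the Gallai--Milgram theorem (already cited in Corollary~\ref{components}): $D$ admits a reaching set $R$ with $|R| \leq \alpha(G)$, since the starting vertices of a minimum vertex-disjoint directed path cover form a reaching set and such a cover has at most $\alpha(G)$ paths. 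Thus $F_k(D) \leq |R| \leq \alpha(G)$, and since $D$ was arbitrary, $\MOF_k(G) \leq \alpha(G)$.

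Combining the two inequalities yields $\MOF_k(G) = \alpha(G)$ whenever $k \geq \Delta$. The main conceptual step is recognizing that, in the high-$k$ regime, the $k$-forcing process collapses to reachability, at which point Gallai--Milgram supplies the matching upper bound; the lower bound is obtained purely by exhibiting an orientation with many sources, which is straightforward given a maximum independent set.
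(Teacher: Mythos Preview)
Your proof is correct, and the lower-bound half is essentially identical to the paper's argument. The upper bound when $k \geq \Delta$, however, is obtained by a genuinely different route. The paper argues directly that a \emph{minimum} $k$-forcing set $S$ in an orientation $D$ realizing $\MOF_k(G)$ must be independent: if $(u,v)$ were an arc with $u,v \in S$, then $u$ (having at most $\Delta \leq k$ out-neighbors) would force $v$ on the first step from $S \setminus \{v\}$, contradicting minimality of $S$; hence $|S| \leq \alpha(G)$. You instead observe that for $k \geq \Delta$ the $k$-forcing process collapses to directed reachability, so a $k$-forcing set is exactly a reaching set, and then invoke the Gallai--Milgram theorem to bound the size of a minimum reaching set by $\alpha(G)$. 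Both arguments are valid; the paper's is more elementary and self-contained (no external theorem is needed), while yours makes explicit the pleasant structural fact that in the regime $k \geq \Delta$ the oriented $k$-forcing number coincides with the minimum reaching-set size, tying the result back to the path-cover framework already used elsewhere in the paper.
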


\begin{proof}
Let $I$ be a largest independent set in $G$; that is, $|I|=\alpha(G)$. Next let $D$ be any orientation for $G$ such that $d^-(v) = 0$ for all $v \in I$ (all edges are oriented away from $I$). In this case, every $k$-forcing set for $D$ must contain all of the vertices in $I$, since otherwise no vertices in $I$ could ever become colored during the $k$-forcing process on $D$. Therefore, it follows that $\MOF_k(G) \geq F_k(D) \geq |I| = \alpha(G)$, proving the first assertion.

For the second assertion, suppose $k \geq \Delta$. Let $D$ be an orientation of $G$ realizing $\MOF_k(G)$ and let $S$ be a minimum oriented $k$-forcing set of $D$. Thus, $\MOF_k(G) =|S| = F_k(D)$. We know from the first paragraph that $\alpha(G) \leq |S|$, so it is enough to show now that $\alpha(G) \geq |S|$. If $u,v\in S$ such that $(u,v)$ is an arc in $D$, then remove $v$ from $S$ and consider the set $S'=S-\{v\}$. Color the vertices of $S'$. In the oriented $k$-forcing process starting from $S'$, $u$ will $k$-force $v$ on the first step because $u$ has at most $\Delta$ non-colored out-neighbors and $k \geq \Delta$. This results in a set $T$, which is a superset of $S$, and is therefore a $k$-forcing set as pointed out in Observation \ref{subset forcing}. Hence, $S'$ is an oriented $k$-forcing set of smaller order than $S$, contradicting the fact that $S$ is a minimum oriented $k$-forcing set of $D$. Consequently, no such arc $(u,v)$ can exist in $S$, so no edges can exist between vertices of $S$ in $G$, and we deduce that $S$ is an independent set of $G$. This means that $\alpha(G) \geq |S|$, and completes the proof of the second assertion. 
\end{proof}

Before leaving this section, we investigate lower bounds for the maximum oriented $1$-forcing number in terms of order and average degree of a graph. The following proposition is an exercise in \cite{Bollobas}.

\begin{proposition}\label{bollobas}
Every graph with average degree $d$ has an induced subgraph with minimum degree at least $\frac{d}{2}$.
\end{proposition}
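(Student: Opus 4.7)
The plan is to prove this by a standard peeling (vertex-deletion) argument with a suitable monovariant. First, I would handle the trivial case $d=0$ (the graph is edgeless and any nonempty induced subgraph works), and then assume $d>0$. For a graph $G$ with $n$ vertices and $m$ edges, so $d = 2m/n$, define for every induced subgraph $H$ of $G$ the potential
\[
f(H) \;=\; |E(H)| - \tfrac{d}{2}\,|V(H)|.
\]
By the definition of $d$, we have $f(G) = m - \tfrac{d}{2}\,n = 0$.

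The key local step is the following. Suppose $H$ is an induced subgraph that contains some vertex $v$ with $\deg_H(v) < d/2$. Let $H' = H - v$ (the induced subgraph obtained by deleting $v$). Then $|E(H')| = |E(H)| - \deg_H(v)$ and $|V(H')| = |V(H)| - 1$, so
\[
f(H') \;=\; f(H) - \deg_H(v) + \tfrac{d}{2} \;>\; f(H).
\]
Hence each deletion of a vertex of degree strictly less than $d/2$ strictly increases $f$.

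Now I iterate: starting from $G$, as long as there exists a vertex of degree (in the current subgraph) strictly less than $d/2$, delete it. Since the vertex count strictly decreases at every step, the process terminates after finitely many steps, say at an induced subgraph $H^*$. If no deletion ever occurred, then $G$ itself has $\delta(G) \geq d/2$ and we take $H^* = G$. Otherwise, after the first deletion the potential strictly exceeds $0$, and it never decreases afterward, so $f(H^*) > 0$. In particular $|E(H^*)| > 0$, so $H^*$ is nonempty. By the termination condition, every vertex of $H^*$ has degree at least $d/2$ in $H^*$, giving $\delta(H^*) \geq d/2$ as required.

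The only subtle point is ruling out the possibility that the peeling process empties the graph entirely; this is exactly what the monovariant $f$ is designed for, since the empty graph has $f = 0$ while any deletion from $G$ forces $f$ to become strictly positive. Everything else is a direct edge/vertex count. I do not expect any real obstacle beyond cleanly setting up this potential.
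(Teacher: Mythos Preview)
Your argument is correct: the potential $f(H)=|E(H)|-\tfrac{d}{2}|V(H)|$ strictly increases with each deletion of a low-degree vertex, which cleanly rules out the peeling process terminating at the empty graph, and the terminal subgraph then has minimum degree at least $d/2$ by construction. The paper does not supply its own proof of this proposition; it simply cites it as an exercise in Bollob\'as, so there is nothing to compare against beyond noting that your peeling-with-monovariant argument is exactly the standard proof one would expect.
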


\begin{proposition}
If $G$ is a graph with minimum degree $\delta\ge 2$ and average degree $d$, then
\begin{equation}
\nonumber \MOF(G) \geq \Big \lfloor \frac{d+1}{4} \Big \rfloor.
\end{equation}
\end{proposition}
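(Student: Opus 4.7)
The plan is to combine three ingredients that have already been set up in the paper: Proposition~\ref{bollobas} (the Erd\H{o}s/Bollob\'as-type exercise), monotonicity of $\MOF$ under induced subgraphs (Proposition~\ref{prop: MOF monotonicity}), and the minimum-degree lower bound (Corollary~\ref{cor: MOF and min degree}). The point is that $\MOF$ behaves well enough on induced subgraphs that we can afford to throw away vertices of low degree inside $G$ and work only in a dense ``core,'' where the simple balanced-orientation lower bound on $\MOF$ becomes strong.

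Concretely, I would first apply Proposition~\ref{bollobas} to extract an induced subgraph $H \subseteq G$ with $\delta(H) \geq d/2$. Since $\delta(H)$ is an integer, this upgrades to $\delta(H) \geq \lceil d/2 \rceil$. Then I invoke Proposition~\ref{prop: MOF monotonicity} to get $\MOF(G) \geq \MOF(H)$, so it suffices to bound $\MOF(H)$ from below.

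Next, I split into cases according to $\delta(H)$. If $\delta(H) \geq 2$, Corollary~\ref{cor: MOF and min degree} applies directly and yields
\[
\MOF(H) \;\geq\; \Big\lfloor \tfrac{\delta(H)}{2} \Big\rfloor \;\geq\; \Big\lfloor \tfrac{\lceil d/2 \rceil}{2} \Big\rfloor \;\geq\; \Big\lfloor \tfrac{d+1}{4} \Big\rfloor,
\]
where the last step is the standard floor identity $\lfloor \lfloor (d+1)/2 \rfloor / 2 \rfloor = \lfloor (d+1)/4 \rfloor$ together with $\lceil d/2 \rceil \geq \lfloor (d+1)/2 \rfloor$. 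If instead $\delta(H) \leq 1$, then $d/2 \leq 1$, forcing $d \leq 2$; but the hypothesis $\delta \geq 2$ gives $d \geq 2$, so $d = 2$ and $\lfloor (d+1)/4 \rfloor = 0$, making the inequality $\MOF(G) \geq 0$ trivial.

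There is really no hard step here: the only mild care is in the floor arithmetic when $d$ is not an integer (since $d = 2m/n$ in general), and in confirming that the ``degenerate'' case $\delta(H) \leq 1$ is absorbed by the trivial $\MOF(G) \geq 1$. The conceptual engine is simply that the two previously established results compose cleanly through a dense-subgraph argument, which is the standard trick for passing from average degree to minimum degree.
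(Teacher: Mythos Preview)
Your proof is correct and follows essentially the same route as the paper: extract a high-minimum-degree induced subgraph via Proposition~\ref{bollobas}, apply the minimum-degree lower bound (Corollary~\ref{cor: MOF and min degree}) there, and transfer back to $G$ via monotonicity (Proposition~\ref{prop: MOF monotonicity}). Your treatment is in fact slightly more careful than the paper's, since you explicitly dispose of the edge case $\delta(H)\le 1$ (where Corollary~\ref{cor: MOF and min degree} does not literally apply), which the paper glosses over.
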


\begin{proof}
By Proposition \ref{bollobas}, $G$ has an induced subgraph $H$ with $\delta(H) \geq \frac{d}{2}$. Since $\delta(H)$ is an integer, this implies that $\delta(H) \geq \Big \lceil \frac{\lceil d \rceil}{2} \Big \rceil$. Applying Corollary \ref{cor: MOF and min degree}, we have
\begin{equation}\label{eq6}
\MOF(H) \geq \Big \lfloor \frac{\delta(H)}{2} \Big \rfloor \geq \Bigg \lfloor \frac{ \Big \lceil \frac{ \lceil d \rceil}{2} \Big \rceil }{2} \Bigg \rfloor.
\end{equation}
Now, if $\lceil d \rceil$ is even, then
\begin{equation}
\nonumber \Bigg \lfloor \frac{ \Big \lceil \frac{ \lceil d \rceil}{2} \Big \rceil }{2} \Bigg \rfloor = \Bigg \lfloor \frac{\lceil d \rceil }{4} \Bigg \rfloor = \Bigg \lfloor \frac{\lceil d \rceil + 1}{4} \Bigg \rfloor,
\end{equation}
where the last equality holds since $\lceil d \rceil$ is congruent to $0$ or $2$ modulo $4$. On the other hand, if $\lceil d \rceil$ is odd, then
\begin{equation}
\nonumber \Bigg \lfloor \frac{ \Big \lceil \frac{ \lceil d \rceil}{2} \Big \rceil }{2} \Bigg \rfloor = \Bigg \lfloor \frac{\lceil d \rceil + 1}{4} \Bigg \rfloor.
\end{equation}
In either case, it follows from inequality \eqref{eq6} that
\begin{equation}
\MOF(H) \geq \Bigg \lfloor \frac{\lceil d \rceil + 1}{4} \Bigg \rfloor \geq \Bigg \lfloor \frac{d + 1}{4} \Bigg \rfloor.
\end{equation}
The result now follows since, by Proposition \ref{prop: MOF monotonicity}, $\MOF(G) \geq \MOF(H)$.
\end{proof}

It is well known that $\alpha(G) \geq \frac{n}{d + 1}$, which follows from a celebrated theorem of Tur\'{a}n in \cite{Turan}. We make use of this result in our next proposition.

\begin{proposition}
For any graph $G$, $\MOF(G) \geq \big \lfloor \frac{\sqrt{n}}{2} \big \rfloor$.
\end{proposition}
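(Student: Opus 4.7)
The plan is to balance the two previously established lower bounds on $\MOF(G)$ that are available at this point in the paper, namely
\begin{equation*}
\MOF(G) \geq \alpha(G) \geq \frac{n}{d+1},
\end{equation*}
which combines Theorem \ref{MOF and independence number} with the quoted theorem of Tur\'an, and
\begin{equation*}
\MOF(G) \geq \left\lfloor \frac{d+1}{4} \right\rfloor,
\end{equation*}
which is the immediately preceding proposition. The first bound is monotonically decreasing in the average degree $d$, while the second is monotonically increasing in $d$, so their minimum is maximized at their crossing point, which is exactly $d+1 = 2\sqrt{n}$. At this crossing value, both bounds evaluate to $\sqrt{n}/2$, which is precisely the quantity appearing in the conclusion. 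So a simple case analysis on $d$ should finish the proof.

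Concretely, I would split into the two cases $d+1 \geq 2\sqrt{n}$ and $d+1 < 2\sqrt{n}$. In the first case, $(d+1)/4 \geq \sqrt{n}/2$ and hence, since the floor function is monotone, $\lfloor (d+1)/4 \rfloor \geq \lfloor \sqrt{n}/2 \rfloor$, so the previous proposition yields the desired inequality directly. In the second case, $n/(d+1) > n/(2\sqrt{n}) = \sqrt{n}/2$, and since $\alpha(G)$ is an integer strictly greater than $\sqrt{n}/2$, it satisfies $\alpha(G) \geq \lfloor \sqrt{n}/2 \rfloor + 1 \geq \lfloor \sqrt{n}/2 \rfloor$, so the first bound suffices.

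The main obstacle is a technicality: the previous proposition formally requires $\delta(G) \geq 2$, and in the first case ($d$ large, which does not by itself force $\delta \geq 2$) we must justify using it. The way to handle this is to observe that whenever $d+1 \geq 2\sqrt{n}$ and $n$ is large enough that $\lfloor \sqrt{n}/2 \rfloor \geq 2$, Proposition \ref{bollobas} produces an induced subgraph $H \subseteq G$ with $\delta(H) \geq d/2 \geq \sqrt{n} - \tfrac{1}{2} \geq 2$; one then runs the previous proposition's argument on $H$ and lifts the bound to $G$ by monotonicity with respect to induced subgraphs (Proposition \ref{prop: MOF monotonicity}). The remaining very small orders, where $\lfloor \sqrt{n}/2 \rfloor \leq 1$, are disposed of trivially, since for a nonempty graph $\MOF(G) \geq 1$ and for an empty graph $\MOF(G) = \alpha(G) = n$.
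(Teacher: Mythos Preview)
Your proposal is correct and follows essentially the same approach as the paper: split on whether $d+1$ exceeds $2\sqrt{n}$, using the Tur\'an bound $\alpha(G)\ge n/(d+1)$ together with Theorem~\ref{MOF and independence number} in the small-$d$ case, and in the large-$d$ case passing via Proposition~\ref{bollobas} to an induced subgraph $H$ with $\delta(H)\ge d/2$, then applying the balanced-orientation lower bound there and lifting to $G$ by Proposition~\ref{prop: MOF monotonicity}. The paper simply inlines this last step (orienting $H$ directly and invoking $F(D)\ge\delta^+(D)$) rather than citing the preceding proposition, which sidesteps the $\delta\ge 2$ hypothesis you had to work around, but the argument is otherwise identical.
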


\begin{proof}
Let $d$ be the average degree of $G$. If $d \leq 2\sqrt{n} - 1$, then by Tur\'{a}n's theorem, $\alpha(G) \geq \frac{n}{d+1} \geq \frac{\sqrt{n}}{2}$. Since $\MOF(G) \geq \alpha(G)$, we are done. Otherwise, if $d > 2\sqrt{n} - 1$, then let $H$ be an induced subgraph of $G$ with $\delta(H) \geq \frac{d}{2}$. Let $D$ be any balanced orientation for $H$ -- that is, $|\deg^+(v) - \deg^-(v)| \leq 1$ for all $v \in H$. Then in particular, $\delta^+(D) \geq \lfloor \frac{\delta(H)}{2} \rfloor \geq \lfloor \frac{d}{4} \rfloor$. Since $d > 2\sqrt{n} - 1$, we have $\delta^+(D) > \lfloor \frac{2\sqrt{n} - 1}{4} \rfloor$. Hence $\delta^+(D) \geq \big \lfloor \frac{\sqrt{n}}{2} \big \rfloor$, and it follows from Proposition \ref{prop: MOF monotonicity} that,
\begin{equation}
\nonumber \MOF(G) \geq \MOF(H) \geq F_1(D) \geq \Big \lfloor \frac{\sqrt{n}}{2} \Big \rfloor.
\end{equation}
\end{proof}

%%%%%%%%%%%%%%%%%%%%%%%%%%%%%%%%%%%%%%%%%%%%%%%%%%%%%%%%%%%%%%%%%%%%%%%%%%%%%%%%%%%%%%%%%%%%%%%%%%%%%%%%%%%%%%%%%%%%%%%%%%%

\subsubsection{Upper bounds on $\MOF(G)$}
In this section, we give a general upper bound on the maximum oriented forcing number and then follow it with some applications.

\begin{theorem}\label{upper MOF}
Let $G$ be a graph with $n$ vertices and let $D$ be an orientation of $G$ which realizes $\MOF(G)$, so that $F(D)=\MOF(G)$. If $H$ is an induced subgraph of $D$, then 
\[ \MOF(G)\leq F(H)+n-|H|\leq \MOF(H)+n-|H|. \]
\end{theorem}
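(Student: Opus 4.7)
The plan is to prove the first inequality $\MOF(G)\leq F(H)+n-|H|$ by constructing an explicit $1$-forcing set for $D$ of that size, and then note that the second inequality $F(H)\leq \MOF(H)$ is immediate from the definition of $\MOF$, since $H$ is a particular orientation of its underlying induced subgraph $G[V(H)]$ and so its forcing number is at most the maximum over all orientations.

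For the construction, let $S_H$ be a smallest $1$-forcing set of the oriented subgraph $H$, so $|S_H|=F(H)$. Define
\[ S \;=\; S_H \,\cup\, \bigl(V(D)\setminus V(H)\bigr), \]
which has cardinality $F(H)+n-|V(H)|$. I claim $S$ is a $1$-forcing set of $D$. Color the vertices of $S$ and leave $V(H)\setminus S_H$ non-colored. Then every non-colored vertex lies in $V(H)$.

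The key observation is that the oriented $1$-forcing process on $D$ starting from $S$ contains the process on $H$ starting from $S_H$ as a sub-process. Indeed, consider any vertex $v\in V(H)$ at some stage of the process on $D$. Its out-neighbors in $D$ split into out-neighbors in $H$ and out-neighbors outside $V(H)$; the latter are all colored (they belong to $S$). Consequently the set of non-colored out-neighbors of $v$ in $D$ is a subset of, and in fact equal to, the set of non-colored out-neighbors of $v$ in $H$. Hence whenever a vertex $v\in V(H)$ is eligible to force during the process on $H$ starting from $S_H$, the same vertex is eligible (possibly even sooner) during the process on $D$ starting from $S$. By induction on the step at which a vertex of $H$ becomes colored in the $H$-process, every vertex of $V(H)\setminus S_H$ is eventually colored in the $D$-process. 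Since all vertices of $V(D)\setminus V(H)$ were colored initially, every vertex of $D$ is colored at termination, so $S$ is a $1$-forcing set of $D$.

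Therefore $\MOF(G)=F(D)\leq |S|=F(H)+n-|V(H)|$, giving the first inequality. The main subtlety to get right in the write-up is the monotonicity argument of the previous paragraph: one should emphasize that having \emph{additional} colored vertices outside $H$ in the $D$-process can only help (never hinder) forcing moves within $H$, so the $H$-process can be replayed step by step inside the $D$-process. I do not anticipate a serious obstacle beyond making this mirroring argument precise.
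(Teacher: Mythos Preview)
Your proof is correct and follows exactly the same approach as the paper: take a minimum forcing set $S_H$ of the induced oriented subgraph $H$, adjoin all vertices of $V(D)\setminus V(H)$, and observe that this is a forcing set of $D$. In fact your write-up is more careful than the paper's, which simply asserts that $Z=(G\setminus H)\cup S$ is ``clearly'' a forcing set of $D$ without spelling out the mirroring argument you provide.
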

\begin{proof}
Suppose $H$ is an induced subgraph of $D$ (an induced subgraph of $G$ with the inherited orientation from $D$). Let $S$ be a minimum forcing set of $H$ of cardinality $F(H)$, and let $Z=(G\setminus H) \cup S$. Clearly, $Z$ is a forcing set of $D$, though it is not necessarily minimum . Thus,
\[
\MOF(G)=F(D)\leq |Z| = |S| + (|G\setminus H|) = F(H) + n -|H| \leq \MOF(H)+n-|H|,
\]
which completes the proof.
\end{proof}

For an application of Theorem \ref{upper MOF}, recall that the clique number of a graph $G$, denoted $\omega(G)$, is the cardinality of a largest complete subgraph of $G$. 

\begin{corollary}
If $G$ is a graph with $n$ vertices and clique number $\omega(G)$. Then, 
\[ \MOF(G) \leq n - \frac{\log_2(\omega(G))}{2}. \]
\end{corollary}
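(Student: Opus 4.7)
The plan is to apply Theorem~\ref{upper MOF} with $H$ equal to the sub-tournament of $D$ induced on a maximum clique $K$ of $G$ (where $D$ is an orientation of $G$ realizing $\MOF(G)$), and then to bound $F(H)$ by induction on $\omega := \omega(G)$. Since $H$ is an induced sub-oriented-graph of $D$ on $|K|=\omega$ vertices, Theorem~\ref{upper MOF} yields
\[
\MOF(G) \le F(H) + n - \omega \le \MOF(K_\omega) + n - \omega,
\]
so the corollary follows once we show that $F(T) \le \omega - \tfrac{1}{2}\log_2 \omega$ for every tournament $T$ on $\omega$ vertices.

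I would prove this last inequality by induction on $\omega$. The cases $\omega \le 3$ follow immediately from $F(T) \le \omega - 1$. For the inductive step with $\omega \ge 4$, pick a vertex $v$ of $T$ that is not a source and has maximum out-degree among non-sources. Such a $v$ exists because at most one vertex of $T$ can be a source; if a source $v_0$ exists, then a maximum-out-degree vertex of the tournament $T - v_0$ is not a source of $T$ (the arc from $v_0$ points toward it). Under this choice $m := |N^+(v)| \ge (\omega - 2)/2 \ge \omega/4$. By the inductive hypothesis applied to $T[N^+(v)]$, there is a $1$-forcing set $S_A \subseteq N^+(v)$ with $|S_A| \le m - \tfrac{1}{2}\log_2 m$. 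Set $S := S_A \cup N^-(v)$; I claim $S$ is a $1$-forcing set of $T$.

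The forcing cascade runs in two phases. Firstly, for every $w \in S_A \subseteq N^+(v)$ one has $v \to w$ and therefore $v \notin N^+(w)$; consequently the set of uncolored out-neighbors of $w$ inside $T$ coincides with the set of uncolored out-neighbors of $w$ inside $T[N^+(v)]$. The forcing of $T[N^+(v)]$ from $S_A$ therefore mirrors itself verbatim inside $T$, and all of $N^+(v)$ becomes colored. Secondly, after this phase $v$ is the only uncolored vertex of $T$; since $N^-(v) \neq \emptyset$ (as $v$ is not a source) and every $w \in N^-(v)$ lies in $S$ and has $v$ as its only uncolored out-neighbor at this stage, $v$ becomes forced. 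The size bound then follows from
\[
|S| = |S_A| + |N^-(v)| \le (m - \tfrac{1}{2}\log_2 m) + (\omega - 1 - m) = \omega - 1 - \tfrac{1}{2}\log_2 m \le \omega - \tfrac{1}{2}\log_2\omega,
\]
where the last inequality uses $m \ge \omega/4$, equivalently $\log_2 m \ge \log_2\omega - 2$.

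The main technical hurdle I foresee is confirming that the forcing of $T[N^+(v)]$ by $S_A$ really does translate faithfully to $T$. The crux is the identity $N^+_T(w) \cap N^+(v) = N^+_{T[N^+(v)]}(w)$ for $w \in N^+(v)$, combined with the fact that the additional out-neighbors of $w$ in $T$ (which lie entirely in $N^-(v)$) are already colored throughout the first phase. A subsidiary point is correctly \emph{separating} the source and no-source cases when selecting $v$; both are handled by the single choice described above, which is what makes the bound $m \ge (\omega-2)/2$ uniform.
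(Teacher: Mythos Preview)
Your argument is correct, but it takes a different route from the paper's proof. The paper applies Theorem~\ref{upper MOF} not to the whole clique tournament but to a \emph{transitive} sub-tournament $H$ of it: by the Erd\H{o}s--Moser theorem, any tournament on $\omega$ vertices contains a transitive sub-tournament on $m=\log_2\omega$ vertices, and a transitive tournament on $m$ vertices has forcing number $\lceil m/2\rceil$. Plugging $|H|=m$ and $F(H)\le m/2$ into Theorem~\ref{upper MOF} yields $\MOF(G)\le n-m/2=n-\tfrac12\log_2\omega$ in one line.

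You instead take $H$ to be the entire tournament on the clique and prove from scratch that every tournament $T$ on $\omega$ vertices satisfies $F(T)\le \omega-\tfrac12\log_2\omega$. Your inductive step---pass to $T[N^+(v)]$ for a non-source $v$ of large out-degree, force that part first, then force $v$ from $N^-(v)$---is really the standard halving argument behind Erd\H{o}s--Moser, repackaged to build a forcing set rather than a transitive chain. The verification that forcing inside $T[N^+(v)]$ lifts to $T$ is exactly right: since $v\to w$ for each $w\in N^+(v)$, the only out-neighbors of $w$ outside $N^+(v)$ lie in $N^-(v)\subseteq S$, so uncolored out-neighbors coincide. What your approach buys is self-containment (no external citation), at the cost of a longer argument; the paper's version is a two-line application once Erd\H{o}s--Moser and the forcing number of transitive tournaments are in hand.
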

\begin{proof}
Let $C$ denote a largest complete subgraph of $G$ with order $\omega(G)$. By a result of Erdos and Moser in \cite{transitive}, $C$ contains a transitive tournament $H$ on $\log_2(\omega(G))$ vertices. By the corollary above, $\MOF(H)=\frac{\log_2(\omega(G))}{2}$. Thus, from Theorem \ref{upper MOF}, 
\[
\MOF(G) \leq F(H) + n - |H| \leq \MOF(H) + n -|H| = \frac{\log_2(\omega(G))}{2} + n - \log_2(\omega(G)) = n - \frac{\log_2(\omega(G))}{2}.
\]
\end{proof}

A second application of Theorem \ref{upper MOF} is as follows. Recall a matching is a collection of mutually non-incident edges. An induced matching $M$ is a matching where no two edges of $M$ are joined by an edge of $G$. We will denote by $\mim(G)$ the number of edges in a largest induced matching. 

\begin{corollary}\label{mim}
If $G$ is a graph with $n$ vertices, then $\MOF(G) \leq n - \mim(G)$. 
\end{corollary}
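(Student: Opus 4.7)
The plan is to apply Theorem \ref{upper MOF} to a carefully chosen induced subgraph, namely the one spanned by an induced matching of maximum size. Let $M$ be an induced matching in $G$ with $|M| = \mim(G)$, and let $H$ denote the induced subgraph of $G$ on the vertex set $V(M)$, so $|V(H)| = 2\,\mim(G)$. Because $M$ is an \emph{induced} matching, $H$ (as an undirected graph) is exactly the disjoint union of the $\mim(G)$ edges of $M$, with no additional edges between endpoints of distinct matching edges.

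Next, I would let $D$ be an orientation of $G$ achieving $F(D) = \MOF(G)$, and consider the oriented subgraph of $D$ induced by $V(M)$; call it $H$ as well, viewed now with its inherited orientation. Under this orientation, $H$ is a disjoint union of $\mim(G)$ single arcs. For a single arc $u \to v$, the set $\{u\}$ is clearly a $1$-forcing set since $u$ has exactly one out-neighbor and will force $v$ on the first step; hence each such component contributes exactly $1$ to the forcing number. Invoking additivity across connected components (Observation \ref{additive}), we conclude $F(H) = \mim(G)$.

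Finally, applying Theorem \ref{upper MOF} to this induced subgraph yields
\[
\MOF(G) \;\leq\; F(H) + n - |V(H)| \;=\; \mim(G) + n - 2\,\mim(G) \;=\; n - \mim(G),
\]
which is precisely the desired bound. There is no real obstacle here; the only conceptual step is recognizing that an induced matching is exactly the right structure to plug into Theorem \ref{upper MOF}, because its inherited orientation decomposes into trivial two-vertex components whose forcing numbers are easy to compute and sum to half the number of vertices used.
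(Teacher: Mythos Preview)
Your proof is correct and follows essentially the same approach as the paper: choose $H$ to be the subgraph induced by a maximum induced matching, observe that under any inherited orientation $H$ is a disjoint union of $\mim(G)$ arcs so $F(H)=\mim(G)$, and plug into Theorem~\ref{upper MOF}. Your version is slightly more explicit in justifying $F(H)=\mim(G)$ via Observation~\ref{additive}, but otherwise the arguments coincide.
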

\begin{proof}
Suppose $G$ is a graph of order $n$ and let $H$ be a maximum induced matching of $G$. Now, $H$ has $\mim(G)$ isolated edges and $|H|=2\mim(G)$ vertices. Let $D$ be an orientation of $G$ realizing $\MOF(G)$, so that $F(D)=\MOF(G)$.  Now, the induced orientation inherited by $H$ will clearly have forcing number equal to half the order of $H$, or $F(H)=\frac{|H|}{2}=\mim(G)$. Now, applying Theorem \ref{upper MOF} we get,
\[
\MOF(G)=F(D)\leq F(H)+n-|H| = \mim(G)+n-2\mim(G)=n-\mim(G),
\]
which completes the proof.
\end{proof}

%%%%%%%%%%%%%%%%%%%%%%%%%%%%%%%%%%%%%%%%%%%%%%%%%%%%%%%%%%%%%%%%%%%%%%%%%%%%%%%%%%%%%%%%%%%%%%%%%%%%%%%%%%%%%%%%%%%%%%%%%%%

\subsubsection{The maximum oriented $k$-forcing number for Trees}\label{trees}

In this section we consider the maximum oriented $k$-forcing number for trees. Let  $T$ be a tree. It is well known that if $u$ and $v$ are diametric vertices in a tree, then $u$ and $v$ have degree 1. A degree 1 vertex in a tree is called a \emph{leaf}. Since $T$ is a tree, there is exactly one shortest path $P$ from $u$ to $v$ in $T$. The neighbors of $u$ and $v$ on $P$ are called the \emph{stem} of $u$ and $v$, respectively.

\begin{lemma}\label{lem: trees lemma}
Let $T$ be a tree with diameter $\diam(T)\ge 3$. Let $u,v \in V$ be diametric, $P$ the shortest path in $T$ from $v$ to $u$, and $w$ the stem of $u$. Then every neighbor of $w$ is a leaf except one, namely its neighbor $z \neq u$ on the path $P$. Moreover, if $T^*$ is the subgraph of $T$ induced by $V \setminus (N[w] \setminus \{z\})$ and $w$ has $q$ leaf neighbors, then $T^*$ has independence number $\alpha(T^*) \leq \alpha(T) - q$.
\end{lemma}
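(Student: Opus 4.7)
My plan is to handle the two assertions separately, using the extremality of $d(v,u) = \diam(T)$ for the first and a direct extension argument for the second.

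For the first assertion, I would rely on the standard fact that any vertex at maximum distance from some other vertex in a tree must be a leaf (otherwise one could extend the path and contradict maximality of $\diam(T)$). Write the $v$--$u$ path as $P = v_0 v_1 \cdots v_d$ with $v_0 = v$, $v_{d-1} = w$, $v_d = u$, and $z = v_{d-2}$, so that $d(v, w) = d-1$ (this uses $\diam(T) \geq 3$ to ensure $v_{d-2}$ exists and differs from $v$). For any neighbor $y$ of $w$ with $y \neq z$, since $T$ is a tree the edge $yw$ yields $|d(v,y) - d(v,w)| = 1$; the possibility $d(v,y) = d-2$ would force $y$ to be the predecessor of $w$ on the unique $v$--$w$ path, namely $z$, contradicting $y \neq z$. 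Hence $d(v, y) = d$, so $(v,y)$ realizes the diameter, and $y$ must be a leaf. This exhibits $N(w) \setminus \{z\}$ as a set of exactly $q$ leaves (including $u$ itself).

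For the second assertion, I would take a maximum independent set $I^*$ of $T^*$ and enlarge it by all of $L = N(w) \setminus \{z\}$. Each vertex of $L$ is a leaf whose unique neighbor in $T$ is $w$, and by construction $w \notin V(T^*)$, so no vertex of $L$ is adjacent in $T$ to any vertex of $I^*$; moreover $L$ is itself pairwise non-adjacent because its elements are distinct leaves attached to the same vertex $w$. Therefore $I^* \cup L$ is an independent set in $T$ of size $\alpha(T^*) + q$, giving $\alpha(T) \geq \alpha(T^*) + q$, which rearranges to the claimed bound $\alpha(T^*) \leq \alpha(T) - q$.

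I do not expect a substantial obstacle. The only genuinely subtle point is ruling out, in the first step, the possibility that $y$ lies strictly closer to $v$ than $w$ does, and this is handled cleanly by the observation that the only such neighbor of $w$ is $z$ itself. The extension argument in the second step is then a direct application of the leaf structure produced in the first step.
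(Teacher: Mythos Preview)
Your proposal is correct and follows essentially the same approach as the paper: both arguments establish that the neighbors of $w$ other than $z$ are leaves by appealing to the diametric property of $(v,u)$, and both prove the independence inequality by observing that a maximum independent set of $T^*$ can be augmented by the $q$ leaf neighbors of $w$ to yield an independent set in $T$. Your treatment is simply more explicit on the first part (spelling out the distance computation rather than asserting the conclusion) and phrases the second part directly rather than by contradiction.
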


\begin{proof}
First, observe that $z$ is not a leaf since the diameter of $T$ is at least 3. Observe also that since $u$ and $v$ are diametric, every neighbor of $w$, except $z$, is a leaf. To see that $\alpha(T^*) \leq \alpha(T) - q$, suppose that $\alpha(T^*) > \alpha(T) - q$ and let $I^*$ be a maximum independent set in $T^*$. Then $I = I^* \cup \{y \in N(w) : y \textrm{ is a leaf}\}$ is an independent set in $T$ with $|I| = |I^*| + q > \alpha(T) - q + q = \alpha(T)$, which is impossible.
\end{proof}

Next we show that the maximum oriented $1$-forcing number of any tree is equal to the independence number.

\begin{theorem}\label{thm: MOF(T) = alpha}
If $T$ is a tree, then $\MOF(T) = \alpha(T)$.
\end{theorem}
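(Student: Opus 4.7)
The lower bound $\MOF(T) \geq \alpha(T)$ is already supplied by Theorem~\ref{MOF and independence number}, so the entire content of this theorem is the reverse inequality $\MOF(T) \leq \alpha(T)$. My plan is to prove this by induction on $n = |V(T)|$. The base case collects the trees of diameter at most $2$, namely the trivial graph and the stars $K_{1,n-1}$; for a star a direct argument shows $\MOF = n - 1 = \alpha$.

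For the inductive step, suppose $\diam(T) \geq 3$, and apply Lemma~\ref{lem: trees lemma} to locate diametric vertices $u,v$, the stem $w$ of $u$, and the neighbor $z \neq u$ of $w$ on the shortest $u$-$v$ path. Let $q$ be the number of leaf neighbors of $w$, denoted $l_1,\ldots,l_q$; by the lemma, $T^* := T - (N[w] \setminus \{z\})$ is a tree with $\alpha(T^*) \leq \alpha(T) - q$. Let $D$ be an arbitrary orientation of $T$ and write $D^* = D[V(T^*)]$. Applying the induction hypothesis to $T^*$ furnishes a forcing set $S^*$ of $D^*$ with $|S^*| \leq \alpha(T^*) \leq \alpha(T) - q$.

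I will then build a forcing set $S$ of $D$ from $S^*$ by adjoining at most $q$ additional vertices chosen from $\{w, l_1, \ldots, l_q\}$. Let $L_{\mathrm{in}} = \{l_i : l_i \to w \text{ in } D\}$ and $L_{\mathrm{out}} = \{l_i : w \to l_i \text{ in } D\}$, of sizes $b$ and $a$ respectively, so $a + b = q$. Set
$$S = S^* \cup L_{\mathrm{in}} \cup L' \cup W,$$
where $L' = L_{\mathrm{out}} \setminus \{l^*\}$ for some fixed $l^* \in L_{\mathrm{out}}$ (taking $L' = \emptyset$ if $a = 0$), and $W = \{w\}$ if $b = 0$ while $W = \emptyset$ otherwise. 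A short case check verifies $|S \setminus S^*| \leq q$ in every case, so $|S| \leq \alpha(T)$.

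The main obstacle is checking that $S$ really is a forcing set for $D$, because the only vertex of $T^*$ with a neighbor outside $T^*$ is $z$, so the effect on the $D^*$-forcing process hinges on the orientation of the edge $\{w,z\}$. If $w \to z$ in $D$, then $z$'s out-neighborhood in $D$ coincides with its out-neighborhood in $D^*$, so the $D^*$-forcing from $S^*$ is unaffected when run inside $D$; in parallel, $w$ is colored either because $w \in S$ (when $b = 0$) or because some leaf in $L_{\mathrm{in}}$ forces $w$ at step $1$ (when $b \geq 1$), after which $w$ eventually sees $z$ and all but one leaf of $L_{\mathrm{out}}$ colored, and forces the remaining leaf $l^*$. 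If instead $z \to w$, then $w$ is an extra out-neighbor of $z$ in $D$, which can delay $z$'s forcing by at most one step (since $w$ is colored by step $1$ by the same reasoning); the rest of the $T^*$-process shifts by at most one step and still completes, and the final forcing of $l^*$ by $w$ proceeds as before. In both cases $D$ is fully colored, finishing the induction and yielding $\MOF(T) \leq \alpha(T)$.
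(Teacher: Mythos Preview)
Your proof is correct and follows essentially the same inductive strategy as the paper: handle stars as the base case, and for $\diam(T)\ge 3$ use Lemma~\ref{lem: trees lemma} to peel off the star $N[w]\setminus\{z\}$ across the bridge $\{w,z\}$, combining the inductive bound $\alpha(T^*)\le \alpha(T)-q$ with the fact that the star side contributes at most $q$.

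The only difference is packaging. The paper invokes Proposition~\ref{bridge} to get $\MOF(T)\le \MOF(T_1)+\MOF(T_2)$ and then plugs in $\MOF(T_1)\le \alpha(T)-q$ and $\MOF(T_2)=q$. You instead fix an arbitrary orientation $D$, take a minimum forcing set $S^*$ of the induced orientation $D^*$ on $T^*$, and build a forcing set for $D$ by hand, doing the case analysis on the direction of $\{w,z\}$ and on how many leaves point into $w$. Your construction is exactly what the proof of Proposition~\ref{bridge} produces when specialized to this situation (indeed your $S\setminus S^*$ is a minimum forcing set of the inherited orientation on the star $T_2$ in each case), so the two arguments are really the same proof at different levels of abstraction. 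The paper's version is shorter because it reuses Proposition~\ref{bridge}; your version is self-contained but has to justify the ``shift by at most one step'' claim in the $z\to w$ case, which is correct (once $w$ is colored at step~$1$, $z$'s out-neighbourhood in $D$ agrees with that in $D^*$ on uncolored vertices, so each $T^*$-coloring step in $D^*$ is replicated in $D$ one step later) but is the one place a reader might want an extra sentence of detail.
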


\begin{proof}
The theorem is trivial when $n(T) \leq 2$, so we assume that $n(T) \geq 3$. Observe that every tree $T$ with diameter at most 2 is a star -- that is, $T$ has a unique maximum degree vertex $v$ of degree $n(T) - 1$ and every other vertex of $T$ is a leaf. Then $\alpha(T) = n(T) - 1$, since the set of leaves of $T$ is a maximum independent set. Thus, by Theorem \ref{MOF and independence number}, $n(T) - 1 = \alpha(T) \leq \MOF(T)$. Clearly $\MOF(T) \leq n(T) - 1$ which means that $\MOF(T) = \alpha(T)$. Thus we may assume that the diameter of $T$ is at least 3, and that $T$ is not a star. This also establishes the base case for induction on the order $n(T)$, since all graphs with $n \leq 3$ have diameter at most 2.

By Theorem \ref{MOF and independence number}, $\MOF(T) \geq \alpha(T)$. We show by induction on the order of $T$ that $\MOF_k(T) \leq \alpha(T)$. Suppose that $\MOF_(T) \leq \alpha(T)$ for all trees $T$ with order $n(T) < p$ for some positive integer $p \geq 4$. Let $T$ be a tree with order $n(T) = p$, $u,v \in V$ diametric, $P$ the shortest path in $T$ from $v$ to $u$, $w$ the stem of $u$ and $z$ the non-leaf neighbor of $w$ on $P$ (which exists by Lemma \ref{lem: trees lemma} since $T$ has diameter at least 3). The the edge $\{w, z\}$ is a bridge in $T$. Let $T_1$ be the subgraph of $T$ induced by $V \setminus (N[w] \setminus \{z\})$ and $T_2$ the subgraph of $T$ induced by $N[w]\setminus \{z\}$. 

Observe that if $w$ has $q$ leaf neighbors then $T_1$ is a tree with order $n(T^*) = n(T) - q - 1 < p$ and that $n(T^*) \geq 2$ since $T$ has diameter at least 3. Let $D$ be an orientation of $T$ for which $f_k(D) = \MOF_k(T)$, and let $D_1, D_2$ be the orientations for $T_1$ and $T_2$, respectively, inherited from $D$. By inductive hypothesis and Lemma \ref{lem: trees lemma}, $\MOF_k(T_1) \leq \alpha(T_1) \leq \alpha(T) - q$. 

Next observe that $T_2$ is a star, and thus by what was shown in the first paragraph, $\MOF_k(T_2) = n(T_2) - 1 = q$ since $T_2$ contains $w$ and the $q$ leaf neighbors of $w$. Thus, by Proposition \ref{bridge},
\begin{equation}
\nonumber \MOF_k(T) \leq \MOF_k(T_1) + \MOF_k(T_2) \leq \alpha(T) - q + q = \alpha(T).
\end{equation}
Thus, $\MOF_k(T)=\alpha(T)$ and the general result now follows by induction.
\end{proof}

In light of Observation \ref{monotonic_MOF} and Theorem \ref{MOF and independence number}, we have the following corollary.

\begin{corollary}
If $T$ is a tree, then $\MOF_k(T) = \alpha(T)$ for all positive integers $k$.
\end{corollary}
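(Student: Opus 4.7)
The plan is to squeeze $\MOF_k(T)$ between two bounds already established in the paper. First I would invoke Theorem~\ref{MOF and independence number}, which gives $\MOF_k(T) \geq \alpha(T)$ for every graph, and in particular every tree, and every positive integer $k$. This handles the lower bound for all $k$ in one line.

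Next I would obtain the matching upper bound by combining Observation~\ref{monotonic_MOF} with Theorem~\ref{thm: MOF(T) = alpha}. Specifically, the observation tells us $\MOF_{k+1}(G) \leq \MOF_k(G)$, so by iterating,
\[
\MOF_k(T) \leq \MOF_{k-1}(T) \leq \dotsb \leq \MOF_2(T) \leq \MOF_1(T) = \MOF(T).
\]
The $k=1$ case of the claim is exactly Theorem~\ref{thm: MOF(T) = alpha}, which says $\MOF(T) = \alpha(T)$. Therefore $\MOF_k(T) \leq \alpha(T)$ for every $k \geq 1$.

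Combining the two inequalities yields $\MOF_k(T) = \alpha(T)$ for all positive integers $k$, which is the desired conclusion. There is no real obstacle here; the corollary is essentially a formal consequence of the $k = 1$ theorem, the monotonicity in $k$, and the universal independence-number lower bound, all of which are already in hand. The only thing worth emphasizing in the write-up is that the chain of inequalities is valid for every $k \geq 1$ (including the trivial case $k \geq \Delta$, which is in fact the equality case already noted in Theorem~\ref{MOF and independence number}), so no separate argument is needed for small or large $k$.
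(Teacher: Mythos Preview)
Your proposal is correct and matches the paper's approach exactly: the paper states the corollary immediately after Theorem~\ref{thm: MOF(T) = alpha} with the remark ``In light of Observation~\ref{monotonic_MOF} and Theorem~\ref{MOF and independence number},'' which is precisely your sandwich argument using monotonicity in $k$ for the upper bound and the independence-number lower bound.
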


%%%%%%%%%%%%%%%%%%%%%%%%%%%%%%%%%%%%%%%%%%%%%%%%%%%%%%%%%%%%%%%%%%%%%%%%%%%%%%%%%%%%%%%%%%%%%%%%%%%%%%%%%%%%%%%%%%%%%%%%%%%

\section{Concluding remarks and open problems}\label{conclusion}
In this paper we have given an extensive study of $k$-forcing in oriented graphs, introduced $\mof_k(G)$ and $\MOF_k(G)$, and related these new graph invariants to a myriad of well studied graph parameters.  However, many interesting problems and questions remain. We highlight that Corollary~\ref{pathcover} states that $\mof(G) = \rho(G)$, where $\rho(G)$ denotes the path covering number of $G$; a well studied graph parameter. This observation suggests we concentrate on the lesser known $\MOF(G)$, and so, we pose the following open problems with this in mind. 
\begin{problem}\label{MOF lower k}
If $G$ is a graph on $n$ vertices and $k$ is a positive integer, determine if $\MOF_k(G) \geq \lceil \frac{n}{k+1} \rceil$.
\end{problem}

Taking $k=1$ in Problem~\ref{MOF lower k}, we next state a very simple and interesting case.
\begin{problem}\label{n/2.1}
For every graph $G$ with order $n \geq 2$, determine if $\MOF(G) \geq \frac{n}{2}$.
\end{problem}

Next recall that the \emph{matching number} of $G$ is typically denoted $\mu(G)$. The following problem would imply Problem~\ref{n/2.1}.
\begin{problem}
For every graph $G$ with order $n\ge 2$, determine if $\MOF(G)\ge n - \mu(G)$.
\end{problem}
It is not hard to see that complete bipartite graphs $K_{x,y}$ with order $n$ have $\MOF(K_{x,y}) = n - 1$. However, a characterization of graphs achieving this equality eludes us. With this in mind we pose the following problem.
\begin{problem}
Characterize connected graphs $G$ with order $n\ge 2$  for which $\MOF(G) = n - 1$.
\end{problem}

Finally, we would like to acknowledge David Amos for his helpful conversations and for some work on earlier drafts of this paper.


\begin{thebibliography}{10}

\bibitem{AIM}
AIM Minimum Rank-Special Graphs~Work Group, Zero forcing sets and the
  minimum rank of graphs, \emph{Linear Algebra Appl.} \textbf{428} (2008), no.~7, 1628--1648.

\bibitem{Amos2014} D.~Amos, Y.~Caro, R.~Davila, and R.~Pepper, Upper bounds on the $k$-forcing number of a graph, \emph{Discrete Applied Mathematics} \textbf{181} (2015), 1--10.

%\bibitem{AzharCaroPepper}
%M. Azhar, Y. Caro and R. Pepper, Maximum oriented forcing number for complete graphs, preprint.

\bibitem{MinRankDigraphs} A.~Berliner, M.~Catral, L.~Hogben, M.~Huynh, K.~Lied, and M.~Young, Minimum rank, maximum nullity, and zero forcing number of simple digraphs, \emph{Electronic Journal of Linear Algebra} \textbf{26}  (2013), Article 52.

\bibitem{Bollobas} B.~Bollob{\'a}s, Modern graph theory, \emph{Graduate Texts in Mathematics}, vol. 184, Springer-Verlag, New York, 1998.

\bibitem{Burgarth} D.~Burgarth and V.~Giovannetti, Full control by locally induced relaxation, \emph{Phys. Rev. Lett.} \textbf{99} (2007), 100501.

%\bibitem{Caro} Y.~Caro, New results on the independence number, Tech. report, Tel-Aviv University, 1979.

\bibitem{Dynamic} Y.~Caro and R.~Pepper, Dynamic approach to $k$-forcing, \emph{Theory and Applications of Graphs}: Vol. 2: Iss. 2, Article 2, (2015).

\bibitem{MOF_Kn} Y. Caro and R. Pepper, Maximum oriented forcing number for complete graphs, manuscript, 2017.

\bibitem{Out-Domination} G.~Chartrand, F.~Harary, and B.Q. Yue, On the out-domination and in-domination numbers of a digraph, \emph{Discrete Mathematics},
  (1999), Vol. 197-198, 179 --183.

\bibitem{Chilakammari} K. Chilakammari, N. Dean, C. X. Kang and E. Yi, Iteration index of a zero forcing set in a graph, \emph{Bull. Inst. Combin. Appl.} \textbf{64} (2012),  57--72.

\bibitem{DaHe17+} R. Davila and M. Henning, \emph{The forcing number of graphs with a given girth, to appear in Quaestiones Mathematicae.}


\bibitem{Davila Kenter} R. Davila and F. Kenter, Bounds for the zero forcing number of a graph with large girth. {\em Theory and Applications of Graphs}, Volume 2, Issue 2, Article 1, 2015.


\bibitem{Dreyer} P. A. Dreyer Jr. and F. S. Roberts, Irreversible $k$-threshold processes: Graph-theoretic threshold models of the spread of disease and of opinion, \emph{Discrete Applied Mathematics} \textbf{157} (2009), 1615--1627.

\bibitem{Edholm} C. J. Edholm, L. Hogben, M. Huynh, J. LaGrange and D. D. Row, Vertex and edge spread of the zero forcing number, maximum nullity, and minimum rank of a graph, \emph{Linear Algebra and its Applications} \textbf{436} (2012), 4352--4372.
	
\bibitem{transitive} P. Erdos and L. Moser, On the representation of directed graphs as unions of orderings, \emph{Publ. Math. Inst. Hungar. Acad. Sci.} \textbf{9} (1964), pp. 125--132.

\bibitem{Gallai-Milgram} T.~Gallai and A.N. Milgram, Verallgemeinerung eines Graphentheoretischen Satzes von R\'edei, \emph{Acta Sc. Math.} \textbf{21} (1960), 181--186.

%\bibitem{Hansberg} A. Hansberg and R. Pepper, On $k$-domination and $j$-independence in graphs, \emph{Discrete Applied Mathematics} \textbf{161} (2013), 1472--1480.

\bibitem{Hogben2010} L.~Hogben, Minimum rank problems, \emph{Linear Algebra and its Applications} 432 (2010), no.~8, 1961 -- 1974, Special issue devoted to the 15th \{ILAS\} Conference at Cancun, Mexico, June 16--20, 2008.

\bibitem{well-balanced} Z. Kiraly and Z. Szigeti, Simultaneous well-balanced orientations of graphs, \emph{Journal of Combinatorial Theory, Series B}, Volume 96, Issue 5, Pages 684-692, 2006.

\bibitem{path-partition} C. Magnant, H. Wang and S. Yuan, Path partitions of almost regular graphs, \emph{Australasian Journal of Combinatorics}, Volume 64 (2) (2016), 334--340.

%\bibitem{Meyer} S. A. Meyer, Zero forcing sets and bipartite circulants, \emph{Linear Algebra and its Applications} \textbf{436} (2012), 888--900.

\bibitem{BalancedOrientation} C.~St. J.~A. Nash-Williams, Well-balanced orientations of finite graphs and unobtrusive odd-vertex-pairings, \emph{Recent Progress in Combinatorics} (Proc. Third Waterloo Conf. on Combinatorics, 1968), Academic Press, New York, 1969, 133--149.

\bibitem{Row2} D. Row, A technique for computing the zero forcing number of a graph with a cut-vertex, \emph{Linear Algebra and its Applications} \textbf{436} (2012), 4423--4432.

\bibitem{Row} D. Row, \emph{Zero forcing number: Results for computation and comparison with other graph parameters}, Ph.D. Thesis, Iowa State University, 2011.

\bibitem{Turan} P.~Turan, On an extremal problem in graph theory, \emph{Math Fiz. Lapok} \textbf{48} (1941), 436--452 (in Hungarian).

%\bibitem{Wei} V.K. Wei, A lower bound on the stability number of a simple graph, \emph{Technical Memorandum} TM 81-11217-9, Bell Laboratories, 1981.

\bibitem{West} D.~West, \emph{Introduction to Graph Theory}, second edition, Prentice Hall Inc., Upper Saddle River, NJ, 2001.

\bibitem{Yi} E. Yi, On the zero forcing number of permutation graphs, \emph{Combinatorial Optimization and Applications}, Lecture Notes in Computer Science Volume 7402, 2012, 61--72.

\bibitem{power} M. Zhao, L. Kang, and G. J. Chang, Power domination in graphs, \emph{Discrete Mathematics} \textbf{306} (2006), 1812--1816.

\end{thebibliography}
\end{document}